\newtheorem{theorem}{Theorem}[section]
\newtheorem{lemma}[theorem]{Lemma}
\newtheorem{definition}[theorem]{Definition}
\newtheorem{corollary}[theorem]{Corollary}
\newtheorem{remark}[theorem]{Remark}
\DeclareMathOperator*{\argmin}{arg\,min}
\title{Efficient Random Walks on Riemannian Manifolds}
\author{Simon Schwarz}
\address{Institute for Mathematical Stochastics\\ 
	University of Göttingen\\
	Goldschmidtstraße 7\\ 37077 Göttingen\\
	Germany}
\email{simon.schwarz@uni-goettingen.de}
\author{Michael Herrmann}
\address{Institute for Partial Differential Equations\\ 
Technische Universität Braunschweig\\
Universitäts\-platz 2\\
38106 Braunschweig\\
Germany}
\email{michael.herrmann@tu-braunschweig.de}
\author{Anja Sturm}
\address{Institute for Mathematical Stochastics\\ 
University of Göttingen\\
Goldschmidtstraße 7\\
37077 Göttingen\\
Germany}
\email{anja.sturm@mathematik.uni-goettingen.de}
\author{Max Wardetzky}
\address{Institute for Numerical and Applied Mathematics\\ 
University of Göttingen\\
Lotzestr. 16-18\\
37083 Göttingen\\
Germany}
\email{wardetzky@math.uni-goettingen.de}
\thanks{We acknowledge support by the Collaborative Research Center 1456 funded by the Deutsche Forschungsgemeinschaft (DFG, German Research Foundation). We thank Pit Neumann for pointing out an error in a previous version of the manuscript and the anonymous referees for improving the quality of our manuscript.}
\begin{document}

\subjclass[2020]{65C30, 60H35, 58J65}

\keywords{Brownian motion, Riemannian manifold, Retractions, Geodesic random walk}

\begin{abstract}
According to a version of Donsker's theorem, geodesic random walks on Riemannian manifolds converge to the respective Brownian motion. From a computational perspective, however, evaluating geodesics can be quite costly. We therefore introduce approximate geodesic random walks based on the concept of retractions. We show that these approximate walks converge in distribution to the correct Brownian motion as long as the geodesic equation is approximated up to second order. As a result we obtain an efficient algorithm for sampling Brownian motion on compact Riemannian manifolds.
\end{abstract}
\vspace*{-1cm}
\maketitle

\section{Introduction}
\label{intro}
Probabilistic models in continuous time with geometric constraints lead to Brownian motion and stochastic differential equations (SDEs) on Riemannian manifolds $(M,g)$. The theory of Brownian motion and SDEs on manifolds has been extensively studied (see, e.g., ~\cite{emery1989,hsu2002}) -- and many results from the Euclidean setting can be generalized to manifold-valued SDEs. 
One example is Donsker's theorem. Here, one considers a geodesic random walk for which the update step at the current location $x_i$ is obtained by uniformly sampling a unit tangent vector $\bar{v} \sim \operatorname{Unif}( S_{x_i})$ with $S_x := \{u\in T_{x}M : \lVert u\rVert_g = 1\}$ and following a geodesic path by setting $x_{i+1} := \operatorname{Exp}_{x_i} (\varepsilon v)$ for some small enough parameter $\varepsilon$, where $v:=\sqrt{m} \bar{v}$, $m$ denotes the dimension of the manifold, and $\operatorname{Exp}_{x}:T_xM \to M$ is the exponential map. We will discuss the reason for scaling by $\sqrt{m}$ below. J{\o}rgensen proved that on complete Riemannian manifolds that satisfy mild assumptions (which in particular hold for compact manifolds), the scaling limit of such a geodesic random walk is the Brownian motion on $(M,g)$, see~\cite{jorgensen1975}. This result has recently been generalized to the setting of Finsler manifolds, see~\cite{ma2021}.

In principle, it is thus possible to simulate Brownian motion on Riemannian manifolds by computing geodesic random walks -- analogously to how one would use Donsker's theorem in the Euclidean case. Unfortunately, this procedure is not computationally efficient on manifolds since the solution of a non-linear second-order geodesic ODE has to be computed in every step. With an eye on applications, we are interested in efficient and simple, yet convergent, methods in order to approximate Brownian motion on Riemannian manifolds. 

We introduce a class of algorithms that yield approximate geodesic random walks by making use of \emph{retractions} that approximate the exponential map and have been introduced in the context of numerical optimization on manifolds, see~\cite{absil2008,absil2012,adler2002,bonnabel2013}. We show that retractions which approximate the geodesic ODE up to second order converge to Brownian motion in the Skorokhod topology, see Section~\ref{sec:convergence}. We consider two prevalent scenarios, where random walks based on such second-order retractions can be computed efficiently: The case of parameterized manifolds and the case of implicitly defined manifolds, see Section~\ref{sec:retraction-walks}. Our approach generalizes the setting of approximate geodesic walks on manifolds with positive sectional curvatures presented in~\cite{mangoubi2018}. Moreover, our method works for arbitrary dimensions, and its cost only depends on the evaluation of the respective retraction.

A retraction in our sense can be understood as both, an approximation of the geodesic ODE, or, equivalently, as a 2-jet in the sense of~\cite{armstrong2018coordinatefree}, which allows for treating coordinate-free SDEs. Armstrong and King developed a 2-jet scheme for simulating SDEs on manifolds in \cite{armstrong2020curved}, which uses an unbounded stepsize in each iteration. Different from our re\-trac\-tion-based approach, which is based on \emph{bounded} stepsizes, the case of unbounded stepsizes hampers efficiency (for the case of local charts) or might even prohibit an implementation (for the case of implicitly defined sub\-ma\-ni\-folds when using a step-and-project approach). Moreover, our random-walk method is based on a Donsker-type theorem, while their approach improves Euler--Maruyama approximations -- resulting in a different type of convergence in terms of the respective topologies.

Outside the setting of 2-jets, one typically considers SDEs on embedded manifolds $M \subset \mathbb{R}^n$ and first solves them in ambient Euclidean space, followed by projecting back to $M$ using Lagrange multipliers, see, e.g.,~\cite{leimkuhler2015,averina2019}.  Projection-based approaches, however, do not immediately extend to the case of parameterized manifolds. Within our unified framework based on retractions, we cover both, the case of embedded manifolds and the case of parameterized manifolds, see Sections~\ref{ssec:example1} \&~\ref{ssec:example2}. 
In particular, our approach leads to an efficient and convergent numerical treatment of Brownian motion on Riemannian manifolds (including drift -- see Remark \ref{rem:diffusion}).

A different scenario where geodesic random walks are commonly used is the problem of sampling the volume measure of polytopes (defined as the convex hull resulting from  certain linear constraints). Originally, Euclidean random walk methods had been considered in this context, see~\cite{vempala2005}. Although the Euclidean perspective is natural in this situation, small stepsizes are required in order to reach points near the boundary, which hampers efficiency in higher dimensions. To circumvent this issue, Lee and Vempala \cite{lee2017polytopes} introduced a Riemannian metric based on the Hessian of a log-barrier -- to the effect of re-scaling space as the walk approaches the boundary. In their implementation, they used collocation schemes for solving the requisite ODEs in order to simulate geodesic random walks.
We leave for future research the question whether such approaches can be reformulated in terms of retractions. 

Furthermore, the problem of sampling on manifolds is commonly addressed by Markov chain Monte Carlo (MCMC) methods, see, e.g.,~\cite{brubaker2012,byrne2013,cantarella2016,yanush2019mcmc}.
Different from our focus, MCMC methods are typically concerned with obtaining samples from a given probability distribution on $M$. Our algorithm, however, yields approximate sample \emph{paths} of a Brownian motion on a manifold.
We are not aware of any results concerned with proving that an MCMC method samples the correct dynamics of a given SDE.
Although not the main focus of our exposition, we show that our method also correctly recovers the stationary measure of a geodesic random walk in the limit of stepsize tending to zero, see Theorem~\ref{thm:inv-measure}.

\section{Retraction-based random walks}\label{sec:retraction-walks}
Throughout this exposition, we consider $m$-dimensional compact and orientable Riemannian manifolds $(M,g)$ without boundary. We use \emph{retractions} to approximate the exponential map on such manifolds.

\begin{definition}\label{def:retraction}
	Let $\operatorname{Ret}: TM\xrightarrow{} M$ be a smooth map, and denote the restriction to the tangent space $T_xM$ by $\operatorname{Ret}_x$ for any $x\in M$. $\operatorname{Ret}$ is a \emph{retraction} if the following two conditions are satisfied for all $x\in M$ and all $v\in T_xM$:
	\begin{enumerate}
		\item $\operatorname{Ret}_x(0) = x$, where $0$ is the zero element in $T_xM$ and
		\item $\frac{d}{d\tau}\operatorname{Ret}_x (\tau v)\big\lvert_{\tau = 0} = v$ (where we identify $T_0T_xM \simeq T_xM$). 
	\end{enumerate}
\end{definition}
A retraction is a \emph{second-order retraction} if it additionally satisfies that for all $x \in M$ and for all $v\in T_xM$ one has that
\begin{equation}\label{eq:second-order}
\frac{D}{d\tau}\left(\frac{d}{d\tau}\operatorname{Ret}_x (\tau v)\right)\Bigg\lvert_{\tau =0}  =\frac{D}{d\tau}\left(\frac{d}{d\tau}\operatorname{Exp}_x (\tau v)\right)\Bigg\lvert_{\tau=0} = 0 \ ,
\end{equation}
where $\frac{D}{d\tau}(\frac{d}{d\tau} \gamma(\tau))$ denotes covariant differentiation of the tangent vector field $\dot \gamma (\tau) = \frac{d}{d\tau} \gamma(\tau)$ along the curve $\gamma$ (following standard notation, see, e.g., \cite{docarmo1992geometry}).
If $M$ is a submanifold of a Euclidean space, Equation \eqref{eq:second-order} is equivalent to
\begin{equation*}
\frac{d^2}{d\tau^2} \operatorname{Ret}_x (\tau v)\big\lvert_{\tau=0} \in \mathcal{N}_xM\text{,}
\end{equation*}
where $\mathcal{N}_xM$ denotes the normal bundle of $M$. Consequently, for the case of submanifolds, one has 
\begin{equation*}
\operatorname{Ret}_x (\tau v) = \operatorname{Exp}_x (\tau v) + \mathcal{O}(\tau^3)
\end{equation*}
for all $x\in M$ and $v\in T_xM$ as $\tau\rightarrow 0$.

Clearly, the exponential map is itself a retraction. The main benefit of retractions is, however, that they can serve as computationally efficient \emph{approximations} of the exponential map. 
Using retractions yields Algorithm \ref{alg:retraction} for simulating random walks on a Riemannian manifold.

\begin{algorithm}
	\caption{Retraction-based random walk}\label{alg:retraction}
	\begin{algorithmic}[1]
		\State \textbf{Input:} Retraction $\operatorname{Ret}$; iterations $N$; stepsize $\varepsilon$; dimension $m$; initial position $x_0$; set $x_0^\varepsilon := x_0$
		\For{$1 \leq i\leq N$}
		\State Sample $\bar{v} \in S_{x_{i-1}^\varepsilon}:=\{u\in T_{x_{i-1}^\varepsilon}M : \lVert u\rVert_g = 1\}$ uniformly wrt.~the Riemannian metric $g$
		\State Set $v:=\sqrt{m}\bar{v}$
		\State $x_i^\varepsilon := \operatorname{Ret}_{x_{i-1}^\varepsilon} (\varepsilon  v)$
		\EndFor
		\State \textbf{Return:} $(x_j^\varepsilon)_{0\leq j\leq N}$
	\end{algorithmic}
\end{algorithm}
Notice that the uniform sampling in Step 3 of the above algorithm is performed with respect to the Riemannian metric restricted to the \emph{unit sphere} in the attendant tangent space. We provide details for this step in Sections~\ref{ssec:example1} and~\ref{ssec:example2} below. Notice furthermore that $\varepsilon$ takes the role of a \emph{spatial} stepsize, while the total physical \emph{time} 
simulated by Algorithm \ref{alg:retraction} is given by $\varepsilon^2N$; see also the time rescaling in Theorem \ref{thm:general-convergence} below.
Step 4 of the algorithm is necessary since uniform sampling on the unit sphere in a space of dimension $m$ results in a covariance matrix $\frac{1}{m}\operatorname{Id}$, while convergence to Brownian motion requires the identity as covariance matrix. The normalizing  factor $\sqrt{m}$ in Step~4 precisely ensures the latter. Indeed, without the renormalization in Step 4 the random walk constructed in Algorithm~\ref{alg:retraction} converges to a time-changed Brownian motion.

The next theorem shows convergence of the retraction-based random walk resulting from Algorithm~\ref{alg:retraction} and provides an explicit expression for the generator of the resulting limit process: 

\begin{theorem}\label{thm:general-convergence}
	Consider the sequence of random variables $(X_i^\varepsilon)_{i\in\mathbb{N}}$ constructed in Algorithm~\ref{alg:retraction} (with $N=\infty$). The continuous-time process $X^\varepsilon := \big(X_{\lfloor \varepsilon^{-2}t\rfloor}^\varepsilon\big)_{t\geq 0}$ converges in distribution to the stochastic process with generator
	\begin{equation}\label{eq:generator}
	(Lf)(x) = \frac12 (\Delta_gf)(x) 
	+ \frac{m}{2\omega_m}\int_{S_x} df\big\lvert_x \left(\frac{D}{d\tau} \frac{d}{d\tau} \operatorname{Ret}_x (\tau \bar{v})\right)\Bigg\lvert_{\tau = 0} \text{d}\bar{v}
	\end{equation}
	in the Skorokhod topology (see Section \ref{sec:convergence} for details). Here $S_x$ denotes the unit sphere in $T_x M$, $\omega_m$ is the volume of this sphere, and $\Delta_g$ is the Laplace--Beltrami operator on $(M,g)$.
\end{theorem}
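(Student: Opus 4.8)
The plan is to invoke the standard machinery for convergence of Markov processes, reducing the statement to two ingredients: (i) convergence of the generators of the rescaled walks to $L$ on a suitable core, and (ii) well-posedness of the limiting diffusion generated by $L$, so that the operator semigroup convergence theorem of Ethier and Kurtz delivers convergence in the Skorokhod topology. To set this up, introduce the one-step transition operator
\[
(T_\varepsilon f)(x) = \frac{1}{\omega_m}\int_{S_x} f\left(\operatorname{Ret}_x(\varepsilon v)\right)\mathrm{d}v
\]
acting on $f\in C^\infty(M)$. Because of the time rescaling $t\mapsto \lfloor \varepsilon^{-2}t\rfloor$, the relevant discrete generator is $A_\varepsilon := \varepsilon^{-2}(T_\varepsilon - \operatorname{id})$, and the heart of the argument is to show $A_\varepsilon f \to Lf$ uniformly on $M$ as $\varepsilon\to 0$.

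The key computation is a coordinate-free second-order Taylor expansion of $\tau\mapsto f(\operatorname{Ret}_x(\tau v))$ at $\tau=0$. Writing $\gamma_v(\tau):=\operatorname{Ret}_x(\tau v)$, the two retraction axioms of Definition~\ref{def:retraction} give $\gamma_v(0)=x$ and $\dot\gamma_v(0)=v$, so the first-order coefficient is $\mathrm{d}f|_x(v)$. Differentiating once more and decomposing the second derivative of $f\circ\gamma_v$ into its Hessian and covariant-acceleration parts yields
\[
\frac{\mathrm{d}^2}{\mathrm{d}\tau^2} f(\gamma_v(\tau))\Big|_{\tau=0} = \operatorname{Hess} f|_x(v,v) + \mathrm{d}f|_x\!\left(\frac{D}{\mathrm{d}\tau}\frac{\mathrm{d}}{\mathrm{d}\tau}\operatorname{Ret}_x(\tau v)\Big|_{\tau=0}\right),
\]
which is precisely where the acceleration of the retraction curve enters. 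As $M$ is compact and $\operatorname{Ret}$ is smooth, the third-order remainder is $\mathcal{O}(\varepsilon^3)$ uniformly in $x$ and $v$. Averaging over $v\in S_x$ then does three things: the first-order average $\tfrac1{\omega_m}\int_{S_x}\mathrm{d}f|_x(v)\,\mathrm{d}v$ vanishes by the antipodal symmetry $v\mapsto -v$; by isotropy of the uniform measure on $S_x$, the Hessian average $\tfrac1{\omega_m}\int_{S_x}\operatorname{Hess}f|_x(v,v)\,\mathrm{d}v$ is a constant multiple of the trace, reproducing the Laplace--Beltrami term of \eqref{eq:generator}; and the acceleration term reproduces the integral drift. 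After dividing by $\varepsilon^2$ this gives $A_\varepsilon f\to Lf$ uniformly, with $L$ as stated.

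It remains to pass from generator convergence to process convergence. The drift $x\mapsto \tfrac1{2\omega_m}\int_{S_x}\tfrac{D}{\mathrm{d}\tau}\tfrac{\mathrm{d}}{\mathrm{d}\tau}\operatorname{Ret}_x(\tau v)|_{\tau=0}\,\mathrm{d}v$ is a smooth vector field (integration of a smooth integrand over the compact unit sphere bundle), so $L=\tfrac12\Delta_g + b$ is a smooth second-order \emph{elliptic} operator on the compact manifold $M$. Classical diffusion theory then guarantees that $L$ generates a Feller semigroup, that the associated martingale problem is well-posed, and that $C^\infty(M)$ is a core for $L$. Invoking the operator semigroup convergence theorem of Ethier and Kurtz---with the uniform convergence $A_\varepsilon f\to Lf$ on the core $C^\infty(M)$, contractivity of the $T_\varepsilon$, and compactness of $M$ providing the state-space bound---yields convergence of the finite-dimensional distributions; upgrading this to convergence in the Skorokhod space $D([0,\infty),M)$ additionally uses that a single step displaces the walk by at most $\mathcal{O}(\varepsilon)$, so that the maximal jump size tends to $0$ and the relative-compactness criterion is met.

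I expect the main obstacle to lie on the analytic rather than the computational side, in two places. First, making the Taylor expansion genuinely coordinate-free and controlling the remainder \emph{uniformly} over the compact unit sphere bundle---rather than merely chart-by-chart---requires care, since the intrinsic objects (covariant derivative, Hessian) must be handled invariantly while the uniform $\mathcal{O}(\varepsilon^3)$ estimate is exactly what licenses the generator limit. Second, one must verify the precise hypotheses of the convergence theorem for discrete chains embedded in continuous time: establishing that $C^\infty(M)$ is a core for $L$ and that the limiting martingale problem is well-posed is what rules out spurious limit points and pins down $L$ uniquely as the generator of the limit. These functional-analytic verifications, rather than the Taylor expansion itself, are where the substance of the argument resides.
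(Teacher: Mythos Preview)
Your proposal is correct and follows the same overall architecture as the paper---generator convergence on a core plus an abstract operator-semigroup theorem---but differs from the paper's execution in three places that are worth noting.

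First, the paper does not Taylor-expand $f\circ\operatorname{Ret}$ directly against the identity. Instead it introduces the geodesic transition kernel $P^\varepsilon f(x)=\frac{1}{\omega_m}\int_{S_x}f(\operatorname{Exp}_x(\varepsilon v))\,\mathrm{d}v$ and decomposes
\[
L^\varepsilon f - Lf = \Bigl(\tfrac{1}{\varepsilon^2}(U^\varepsilon f - P^\varepsilon f)-\tilde G f\Bigr) + \Bigl(\tfrac{1}{\varepsilon^2}(P^\varepsilon f - f)-\tfrac12\Delta_g f\Bigr),
\]
handling the second bracket by citing J{\o}rgensen's Proposition~2.3 and estimating only the first bracket by a Taylor expansion that compares $f\circ\operatorname{Ret}$ with $f\circ\operatorname{Exp}$. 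This isolates the retraction's covariant acceleration as the sole new contribution and reuses the classical geodesic-walk result as a black box; your direct expansion is more self-contained but recomputes the Laplacian limit that J{\o}rgensen already supplies.

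Second, the paper does not apply a discrete-time Ethier--Kurtz theorem to $X^\varepsilon$ directly. Because $X^\varepsilon$ has deterministic jump times, the paper first Poissonizes via an independent rate-$\varepsilon^{-2}$ Poisson clock to obtain a genuine Feller process $Z^\varepsilon$, invokes Kallenberg's Theorem~19.25 for convergence of Feller processes, and then transfers the conclusion back to $X^\varepsilon$ via Theorem~19.28 (law of large numbers for the Poisson clock). Your route through the discrete-chain version of Ethier--Kurtz is equally valid and sidesteps the Poissonization device.

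Third, the paper takes $C^{(2,\alpha)}(M)$ as the core and uses H\"older continuity of $\operatorname{Hess} f$ to bound the Lagrange remainder (yielding an $\varepsilon^\alpha$ rate), whereas you work on $C^\infty(M)$ with an $\mathcal{O}(\varepsilon^3)$ remainder. Both choices suffice; the paper's is slightly sharper in the function class, yours in the rate.
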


We defer the proof to Section~\ref{sec:convergence}. Notice that Theorem \ref{thm:general-convergence} immediately implies a necessary and sufficient condition for a retraction-based random walk to converge to Brownian motion on a Riemannian manifold $(M,g)$:
\begin{corollary}
	Using the same notation as in Theroem~\ref{thm:general-convergence}, the continuous-time process $X^\varepsilon$ converges in distribution to the Brownian motion on $M$ if and only if the Laplacian (also known as the \emph{tension}) of $\operatorname{Ret}_x: T_xM\xrightarrow{} M$ vanishes at $0 \in T_{x}M$ for every $x\in M$. In particular, this holds for second-order retractions.
\end{corollary}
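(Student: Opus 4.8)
The plan is to derive the corollary directly from Theorem~\ref{thm:general-convergence} by analyzing when the extra drift term in the generator \eqref{eq:generator} vanishes. The limit process is Brownian motion on $(M,g)$ precisely when its generator equals $\tfrac12\Delta_g$, so the entire statement reduces to the claim that the integral term
\begin{equation*}
\frac{1}{2\omega_m}\int_{S_x} df\big\lvert_x\!\left(\frac{D}{d\tau}\frac{d}{d\tau}\operatorname{Ret}_x(\tau v)\right)\!\Bigg\lvert_{\tau=0}\,\mathrm{d}v
\end{equation*}
vanishes for all smooth $f$ and all $x$ if and only if each $\operatorname{Ret}_x$ is harmonic. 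First I would introduce, for fixed $x$, the vector field
\begin{equation*}
A(x) := \frac{1}{\omega_m}\int_{S_x}\left(\frac{D}{d\tau}\frac{d}{d\tau}\operatorname{Ret}_x(\tau v)\right)\Bigg\lvert_{\tau=0}\,\mathrm{d}v \in T_xM,
\end{equation*}
so the extra term is $\tfrac12\,df|_x(A(x))$. Since this must vanish for every $f$, and $df|_x$ ranges over all of $T_x^*M$ as $f$ varies, the drift vanishes identically if and only if $A(x)=0$ for all $x\in M$.

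The key step is then to identify $A(x)$ as the tension field of the map $\operatorname{Ret}_x\colon T_xM\to M$ evaluated at the origin. Recall that the tension field of a smooth map $\phi\colon(N,h)\to(M,g)$ is $\tau(\phi)=\operatorname{tr}_h\nabla d\phi$, the metric trace of the second fundamental form of $\phi$; a map is harmonic exactly when its tension field vanishes. Here the domain $N=T_xM$ carries the flat Euclidean metric $h=g_x$ induced by the inner product on the tangent space. I would compute the trace at the origin $0\in T_xM$ by summing the second covariant derivative along an orthonormal basis $e_1,\dots,e_m$ of $T_xM$: each diagonal term is exactly $\frac{D}{d\tau}\frac{d}{d\tau}\operatorname{Ret}_x(\tau e_j)|_{\tau=0}$, because the curve $\tau\mapsto \tau e_j$ is a straight line (hence a geodesic in the flat domain, so the domain-connection term drops out), and the second covariant derivative of $\operatorname{Ret}_x$ along it reduces to the covariant acceleration appearing in \eqref{eq:second-order}. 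Thus
\begin{equation*}
\tau(\operatorname{Ret}_x)(0) = \sum_{j=1}^m \left(\frac{D}{d\tau}\frac{d}{d\tau}\operatorname{Ret}_x(\tau e_j)\right)\Bigg\lvert_{\tau=0}.
\end{equation*}

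The main obstacle is relating this finite orthonormal-basis sum to the spherical average defining $A(x)$: one is a sum of $m$ diagonal entries of the symmetric bilinear form $B(u,w):=\nabla d\operatorname{Ret}_x(u,w)|_0$, while the other is $\int_{S_x}B(v,v)\,\mathrm{d}v$. These agree up to the positive constant $\omega_m/m$ by the standard fact that for any symmetric bilinear form $B$ on an $m$-dimensional inner-product space, $\int_{S_x}B(v,v)\,\mathrm{d}v=\frac{\omega_m}{m}\operatorname{tr}B$; I would verify this by noting that $\int_{S_x}v^i v^j\,\mathrm{d}v=\frac{\omega_m}{m}\delta^{ij}$ in an orthonormal basis, which follows from symmetry considerations (the off-diagonal integrals vanish by oddness and the diagonal ones are equal by rotational symmetry, summing to $\omega_m$). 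This identifies $A(x)=\frac{1}{m}\tau(\operatorname{Ret}_x)(0)$, so $A(x)=0$ if and only if $\operatorname{Ret}_x$ is harmonic at the origin. Finally, I would argue that harmonicity at the single point $0\in T_xM$ for every $x\in M$ is what the corollary means by ``$\operatorname{Ret}_x$ is a harmonic map,'' and then conclude the last sentence: a second-order retraction satisfies \eqref{eq:second-order}, which forces every diagonal term and hence the whole tension field to vanish, so second-order retractions are harmonic.
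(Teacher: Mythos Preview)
Your proposal is correct and follows essentially the same route as the paper: both identify the drift term in \eqref{eq:generator} with (a positive multiple of) the tension field $\operatorname{trace}\nabla d\operatorname{Ret}_x$ at the origin and conclude via the definition of harmonic maps. You supply more detail than the paper---particularly the spherical-average identity $\int_{S_x}B(v,v)\,\mathrm{d}v=\tfrac{\omega_m}{m}\operatorname{tr}B$ that the paper leaves implicit, and the observation that the argument only detects harmonicity at the origin of each $T_xM$---but the structure is the same.
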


\begin{proof}
	By definition, the Laplacian of $\operatorname{Ret}_x: T_xM\xrightarrow{} M$ is defined as
	\begin{equation*}
	\Delta \operatorname{Ret}_x := \operatorname{trace} \nabla d  \operatorname{Ret}_x \ ,
	\end{equation*}
	where $\nabla d  \operatorname{Ret}_x$ is the Hessian of the $C^\infty$-map $\operatorname{Ret}_x$.
	Clearly, the Laplacian of $\operatorname{Ret}_x$ vanishes at $0 \in T_{x}M$ if and only if
	\begin{equation*}
	0= \int_{S_x}\frac{D}{d\tau} \frac{d}{d\tau} \operatorname{Ret}_x (\tau \bar{v})\Bigg\lvert_{\tau = 0} \text{d}\bar{v} \ . 
	\end{equation*}	
	Applying Theorem~\ref{thm:general-convergence} proves the first claim. If $\operatorname{Ret}_x$ is a second-order retraction, the second claim immediately follows since $\frac{D}{d\tau} \frac{d}{d\tau} \operatorname{Ret}_x (\tau v)\big\lvert_{\tau = 0}=0$ for all $v\in T_xM$.
\end{proof}

Thus, random walks generated by second-order retractions converge to the Brownian motion on $(M,g)$. 

\begin{remark}\label{rem:diffusion}
	Drift can seamlessly be incorporated into our approach. Indeed, when modeling drift by a vector field $X$, then Step 4 of Algorithm~\ref{alg:retraction} needs to be altered according to $v:=\sqrt{m}\bar{v} + \varepsilon X\lvert_{x_{i-1}^\varepsilon}$. The stochastic process resulting from this altered version of Algorithm~\ref{alg:retraction} converges to the process with generator $Lf+df(X)$, where $Lf$ is the generator given by~\eqref{eq:generator}. The proof of this fact follows the proof of Theorem~\ref{thm:general-convergence} with obvious modifications.
\end{remark}

In the next two subsections we consider two concrete examples of second-order retractions that are \emph{computationally efficient}. 

\subsection{Retractions based on local parameterizations}\label{ssec:example1}
Our first example of computationally efficient retractions is based on local  parameterizations of compact $m$-dimensional Riemannian manifolds $(M,g)$. 
Consider an atlas consisting of finitely many charts $\{(U_i, \phi_i)\}_{i\in I}$, where every $U_i \subset \mathbb{R}^m$ is open, $\phi_i:U_i \to M$ 
is a diffeomorphism onto its image in $M$, and where the relatively open sets $\{\phi_i(U_i)\}$ cover $M$. In concrete applications, such parameterizations often arise naturally, e.g., for isometrically embedded manifolds $M\subset\mathbb{R}^{n}$. 

For a given $x\in M$, $v\in T_xM$, and $(U_i,\phi_i)$ with $x\in \phi_i(U_i)$, let $\tilde{x} = \phi_i^{-1} (x)$ and $\tilde{v} = (d\phi_i^{-1})\lvert_x v$. 
We assume that for some fixed $\varepsilon > 0$ and any $x\in M$, there exists a chart $(U_k, \phi_k)$ with $\tilde x\in U_{k}$ such that 
$$\tilde{x}+\tilde{v}-\frac{1}{2}\Gamma_{ij}\tilde{v}^i\tilde{v}^j \in U_k$$
for all $\tilde v =(d\phi_k^{-1})\lvert_x (\varepsilon \sqrt{m}\,\bar{v})$ with $\bar{v}\in S_x$, where the $(\Gamma_{ij})$ denote the Christoffel symbols of the respective chart.
This assumption is readily satisfied for any compact $M$, provided that $\varepsilon$ is chosen small enough.
In order to find such a sufficiently small $\varepsilon$ in Algorithm \ref{alg:retraction}, we propose to restart the algorithm with stepsize $\frac{\varepsilon}{2}$ if the condition is violated. 
Then we define the retraction
\begin{equation}\label{eq:ret-param}
\operatorname{p-Ret}_x (v) := \phi_k\left(\tilde{x}+\tilde{v}-\frac{1}{2}\Gamma_{ij}\tilde{v}^i\tilde{v}^j\right)\text{.}
\end{equation} 
Notice that Christoffel symbols can be computed numerically. For low-dimensional manifolds, symbolic differentiation accelerates their computation.

\begin{lemma}
	The retraction defined by Equation~\eqref{eq:ret-param} is of second order.
\end{lemma}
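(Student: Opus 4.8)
The plan is to write the retraction as a composition through the chart. Setting $\tilde{\gamma}(\tau) := \tilde{x} + \tau\tilde{v} - \frac{\tau^2}{2}\Gamma_{ij}\tilde{v}^i\tilde{v}^j$, a quadratic curve in the coordinate domain $U_k\subset\mathbb{R}^m$ with components $\tilde{\gamma}^k(\tau) = \tilde{x}^k + \tau\tilde{v}^k - \frac{\tau^2}{2}\Gamma^k_{ij}\tilde{v}^i\tilde{v}^j$, we have $\operatorname{p-Ret}_x(\tau v) = \phi_k(\tilde{\gamma}(\tau))$. I would first dispatch the two defining properties of a retraction from Definition~\ref{def:retraction}: since $\tilde{\gamma}(0) = \tilde{x}$ we get $\operatorname{p-Ret}_x(0) = \phi_k(\tilde{x}) = x$, and since $\dot{\tilde{\gamma}}(0) = \tilde{v}$ the chain rule gives $\frac{d}{d\tau}\operatorname{p-Ret}_x(\tau v)\big\lvert_{\tau=0} = d\phi_k\lvert_{\tilde{x}}\,\tilde{v} = v$ (using $\tilde{v} = (d\phi_k^{-1})\lvert_x v$), which is exactly the identity on $T_xM$.

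For the second-order condition~\eqref{eq:second-order}, I would compute the covariant acceleration of the curve $\gamma := \operatorname{p-Ret}_x(\,\cdot\, v)$ in the chart. The key structural observation is that $\phi_k$ is a parameterization of the \emph{isometrically} embedded $M$, so the symbols $\Gamma^k_{ij}$ in~\eqref{eq:ret-param} are precisely the Christoffel symbols of the pullback metric $\phi_k^* g$, whose components are the first fundamental form $g_{ij} = \langle \partial_i\phi_k, \partial_j\phi_k\rangle$. By naturality of the Levi--Civita connection under the isometry $\phi_k:(U_k,\phi_k^*g)\to(M,g)$ onto its image, the intrinsic covariant derivative $\frac{D}{d\tau}\dot{\gamma}$ along $\gamma$ on $M$ agrees with the covariant acceleration of $\tilde{\gamma}$ on $(U_k,\phi_k^*g)$, whose components read $\big(\frac{D}{d\tau}\dot{\tilde{\gamma}}\big)^k = \ddot{\tilde{\gamma}}^k + \Gamma^k_{ij}\dot{\tilde{\gamma}}^i\dot{\tilde{\gamma}}^j$.

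The conclusion then follows by a one-line evaluation at $\tau = 0$. From the quadratic form of $\tilde{\gamma}$ one reads off $\ddot{\tilde{\gamma}}^k(0) = -\Gamma^k_{ij}(\tilde{x})\tilde{v}^i\tilde{v}^j$ and $\dot{\tilde{\gamma}}^k(0) = \tilde{v}^k$, so the two terms cancel exactly, giving $\big(\frac{D}{d\tau}\dot{\tilde{\gamma}}\big)^k\big\lvert_{\tau=0} = -\Gamma^k_{ij}\tilde{v}^i\tilde{v}^j + \Gamma^k_{ij}\tilde{v}^i\tilde{v}^j = 0$. This establishes~\eqref{eq:second-order}, so $\operatorname{p-Ret}$ is a second-order retraction. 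The cancellation is by design: the quadratic coefficient in~\eqref{eq:ret-param} is chosen to be exactly the geodesic correction term, i.e.\ $\tilde{\gamma}$ is the second-order Taylor polynomial of the coordinate geodesic through $\tilde{x}$ with velocity $\tilde{v}$.

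The main obstacle is not the algebra, which is the short cancellation above, but making precise the identification used in the second paragraph: that the covariant acceleration of $\dot{\gamma}$ along $\gamma$ on $M$ can be computed intrinsically in the chart via the pullback connection, with the correct Christoffel symbols. I would justify this through the diffeomorphism-invariance of the Levi--Civita connection, noting that $\phi_k$ is an isometry between $(U_k,\phi_k^*g)$ and $(\phi_k(U_k),g)$ and that the geodesic $\varepsilon$-ball assumption keeps the whole curve inside the chart image. Some care is also warranted to confirm that $(\Gamma_{ij})$ refers to the symbols of $\phi_k^*g$ rather than to those of any ambient connection on $\mathbb{R}^n$, and to fix the convention that $\Gamma_{ij}\tilde{v}^i\tilde{v}^j$ denotes the vector with $k$-th component $\Gamma^k_{ij}\tilde{v}^i\tilde{v}^j$.
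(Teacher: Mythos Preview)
Your proof is correct. Both your argument and the paper's hinge on the same fact---that the quadratic coefficient $-\frac{1}{2}\Gamma^k_{ij}\tilde v^i\tilde v^j$ in \eqref{eq:ret-param} is exactly the second-order term of the coordinate geodesic via the geodesic ODE---but the logical wrapping differs. The paper Taylor-expands the \emph{geodesic} $\gamma=\operatorname{Exp}_x(\cdot\,v)$ in the chart, uses the geodesic equation to identify its second-order Taylor polynomial with $\operatorname{p-Ret}_x$, and then invokes the submanifold criterion $\operatorname{Ret}_x(\tau v)=\operatorname{Exp}_x(\tau v)+\mathcal O(\tau^3)$ to conclude. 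You instead verify \eqref{eq:second-order} directly: you write down the \emph{retraction} curve in coordinates, compute its covariant acceleration $\ddot{\tilde\gamma}^k+\Gamma^k_{ij}\dot{\tilde\gamma}^i\dot{\tilde\gamma}^j$ at $\tau=0$, and observe the designed cancellation. Your route is slightly more intrinsic (it does not pass through the $\mathcal O(\tau^3)$ equivalence and needs no ambient embedding), and it also explicitly checks the two base retraction axioms, which the paper leaves implicit; the paper's route, in turn, makes the comparison with the exponential map more transparent.
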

\begin{proof}
	Let $(\gamma(\tau))_{\tau\geq 0}$ denote the geodesic satisfying $\gamma (0) = x$ and $\dot{\gamma}(0) = v$. In the parameter domain, $\gamma$ satisfies the ordinary differential equation
	\begin{equation}\label{eq:geodesic-ode}
	\ddot{\gamma}^k + \Gamma_{ij}^k \dot{\gamma}^i\dot{\gamma}^j = 0 \ .
	\end{equation}
	The Taylor expansion of $\gamma$ reads
	\begin{equation*}
	\begin{split}
	\gamma^k (\tau) &= \gamma^k (0) + \tau \Dot{\gamma}^k (0) + \frac{\tau^2}{2} \Ddot{\gamma}^k (0)  + \mathcal{O}(\tau^3) \\
	&= \gamma^k (0) + \tau\Dot{\gamma}^k (0) - \frac{\tau^2}{2}\Gamma_{ij}^k \Dot{\gamma}^i(0)\Dot{\gamma}^j(0) + \mathcal{O}(\tau^3) \\
	&= \operatorname{p-Ret}^k_x(v) +\mathcal{O}(\tau^3)\text{.}
	\end{split}
	\end{equation*}
	Since $\operatorname{Exp}_x (\tau v) = \gamma (\tau)$, this shows that
	\begin{equation*}
	\operatorname{p-Ret}_x(\tau v) = \operatorname{Exp}_x (\tau v) + \mathcal{O}(\tau^3)
	\end{equation*}
	as $\tau\rightarrow 0$, which proves that $\operatorname{p-Ret}$ is indeed a second-order retraction. 
\end{proof}

Another aspect of parameterization-based approaches concerns the computation of tangent vectors $\tilde{v}$ in the parameter domain that are \emph{uniformly distributed with respect to the Riemannian metric} $g$.
An efficient procedure for this task is presented in Algorithm \ref{alg:sampling}.

\begin{algorithm}
	\caption{Uniform sampling of $\tilde{v}$ w.r.t.~the Riemannian metric}\label{alg:sampling}
	\begin{algorithmic}[1]
		\State \textbf{Input:} Chart $(U, \phi)$; current position $\tilde{x}\in U \subset \mathbb{R}^m$
		\State Sample $m$ standard normal variates $w_1, \dots, w_m$.
		\State Compute
		\begin{equation}\label{eq:rand-dir-sphere}
		z := \left(\sum_{i=1}^m w_i^2\right)^{-\frac{1}{2}} (w_1, \dots, w_m)^T\ .
		\end{equation}
		\State Compute the singular value decomposition (SVD) of the pullback metric $(g_{ij}) = \phi^{*}g$ 
		at point $\tilde{x}$:
		\begin{equation*}
		(g_{ij})\lvert_{\tilde{x}} = \mathbf{V}\mathbf{\Sigma} \mathbf{V}^T
		\end{equation*}
		with diagonal matrix $\mathbf{\Sigma}$ and orthogonal matrix $\mathbf{V}$.
		\State Compute
		\begin{equation*}
		\tilde{v} := \sum_{i=1}^m \frac{z_i}{\sqrt{\mathbf{\Sigma}_{ii}}} \mathbf{V}_i \ ,
		\end{equation*}
		where $\mathbf{V}_i$ the $i$th column vector of $\mathbf{V}$.
		\State \textbf{Return:} $\tilde{v}$
	\end{algorithmic}
\end{algorithm}

%

Notice that Equation~\eqref{eq:rand-dir-sphere} is well defined since $w_k\neq 0$ a.s. for all $1\leq k \leq m$ and that $z$ is a sample of a uniformly distributed random variable on $\mathbb{S}^{m-1}$.
It is straightforward to verify the following claim:

\begin{lemma}
	The vector $\tilde{v}$ constructed in Algorithm \ref{alg:sampling}  is a unit tangent vector that is uniformly sampled with respect to the Riemannian metric $(g_{ij})$.
\end{lemma}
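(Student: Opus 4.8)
The plan is to recognize the construction in Algorithm~\ref{alg:sampling} as the image of a Euclidean-uniform sample under a single linear map, and then to verify that this map is an isometry between the Euclidean inner product space and $T_xM$ equipped with $g$. Writing $G := (g_{ij})\lvert_{\tilde{x}} = V\Sigma V^T$ and collecting the columns $V_i$ into the orthogonal matrix $V$, the output~\eqref{eq:rand-dir-param} can be expressed compactly as $\tilde{v} = A z$ with the linear map $A := V\Sigma^{-1/2}$, where $\Sigma^{-1/2}$ denotes the diagonal matrix with entries $\Sigma_{ii}^{-1/2}$. This is legitimate because $G = (d\phi)^T d\phi$ is symmetric positive definite (as $d\phi$ has full rank $m$), so all eigenvalues $\Sigma_{ii}$ are strictly positive and $A$ is a linear isomorphism of $\mathbb{R}^m$. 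As already observed after the algorithm, $z$ from~\eqref{eq:rand-dir-sphere} is uniformly distributed on the Euclidean unit sphere $\mathbb{S}^{m-1}$, since $(w_1,\dots,w_m)$ is a rotationally invariant standard Gaussian vector and $z$ is its normalization.

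First I would compute $A^T G A$ directly. Using $V^T V = I$, one obtains $A^T G A = \Sigma^{-1/2} V^T (V\Sigma V^T) V \Sigma^{-1/2} = \Sigma^{-1/2}\Sigma\Sigma^{-1/2} = I$. Hence for every $z$ one has $\lVert A z\rVert_g^2 = z^T A^T G A z = \lVert z\rVert_2^2$, so $A$ is a linear isometry from $(\mathbb{R}^m,\langle\cdot,\cdot\rangle)$ onto $(\mathbb{R}^m, g)$ in the chart. Applying this to the unit vector $z$ gives $\lVert\tilde{v}\rVert_g = \lVert z\rVert_2 = 1$, which settles the claim that $\tilde{v}$ is a unit tangent vector, and simultaneously shows that $A$ maps $\mathbb{S}^{m-1}$ bijectively onto the Riemannian unit sphere $S_x$.

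It then remains to identify the law of $\tilde{v} = A z$. Since $A$ is a linear isometry of the ambient inner product spaces, its restriction $A\lvert_{\mathbb{S}^{m-1}}\colon \mathbb{S}^{m-1}\to S_x$ is an isometry of the two unit spheres equipped with their respective induced Riemannian metrics. Isometries preserve the Riemannian volume measure, and hence carry the normalized surface measure on $\mathbb{S}^{m-1}$ to the normalized surface measure on $S_x$. Consequently, the push-forward under $A$ of the uniform law of $z$ is exactly the uniform distribution on $S_x$ with respect to $g$, which is the assertion.

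The routine ingredients here are the matrix identity $A^T G A = I$ and the normalization of $z$. The one step that deserves care -- and which I regard as the crux -- is the measure-theoretic assertion that a linear isometry transports the uniform measure on one unit sphere to the uniform measure on the other. I would justify this by observing that the induced metric on each sphere is precisely the restriction of the ambient inner product to the sphere's tangent spaces, so that $A$, preserving the ambient inner product and mapping sphere onto sphere, pulls back the induced metric on $S_x$ to the induced metric on $\mathbb{S}^{m-1}$; the associated volume forms then correspond under $A$, giving equality of the normalized surface measures after dividing by the (necessarily equal) total volumes.
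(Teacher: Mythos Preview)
Your proof is correct, but it follows a genuinely different route from the paper's. The paper exploits the isometric embedding $M\subset\mathbb{R}^n$: it takes the singular value decomposition $(d\phi)\lvert_{\tilde{x}} = U D V^T$, identifies $\Sigma = D^T D$, and then computes directly that $d\phi(\tilde{v}) = \sum_i z_i U_i$, which is manifestly a uniform sample on the unit sphere of the embedded tangent space $T_{\phi(\tilde{x})}M\subset\mathbb{R}^n$; uniformity with respect to $g$ then follows because the embedding is isometric. Your argument, by contrast, stays entirely in the chart: you package the algorithm as $\tilde{v} = A z$ with $A = V\Sigma^{-1/2}$, verify the intrinsic identity $A^T G A = I$, and conclude that $A$ is a linear isometry from the Euclidean inner product to $g\lvert_{\tilde{x}}$, hence carries the uniform measure on $\mathbb{S}^{m-1}$ to the uniform measure on the $g$-unit sphere. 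The paper's approach makes the ambient-space geometry explicit and ties the result to the embedding; yours is more self-contained, uses only the eigendecomposition that Algorithm~\ref{alg:sampling} actually computes (rather than the SVD of $d\phi$), and would apply verbatim to an abstract Riemannian metric given in a chart without any reference to an embedding.
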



\begin{figure*}[t]
	\centering
	\includegraphics[align=t,width=0.25\textwidth]{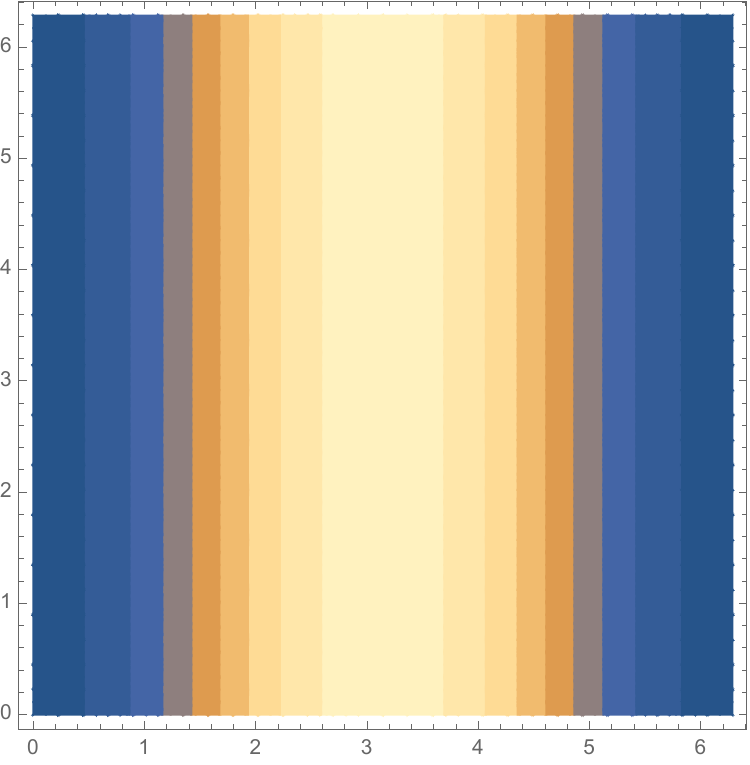}
	\includegraphics[align=t,width=0.05\textwidth]{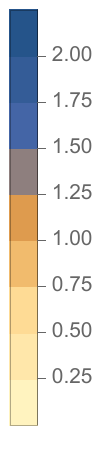}\hfill
	\includegraphics[align=t,width=0.25\textwidth]{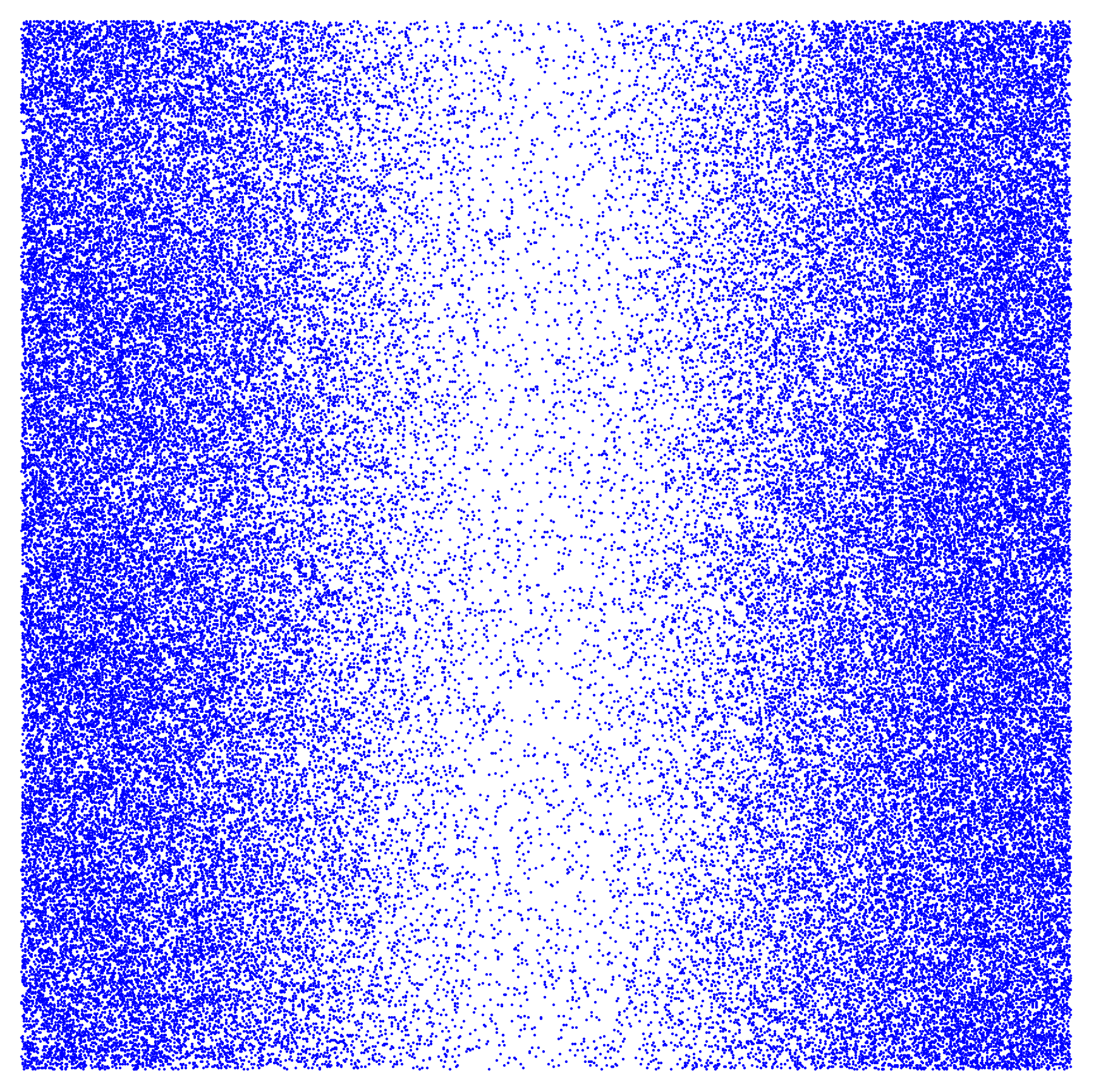}\hfill
	\includegraphics[align=t,width=0.4\textwidth]{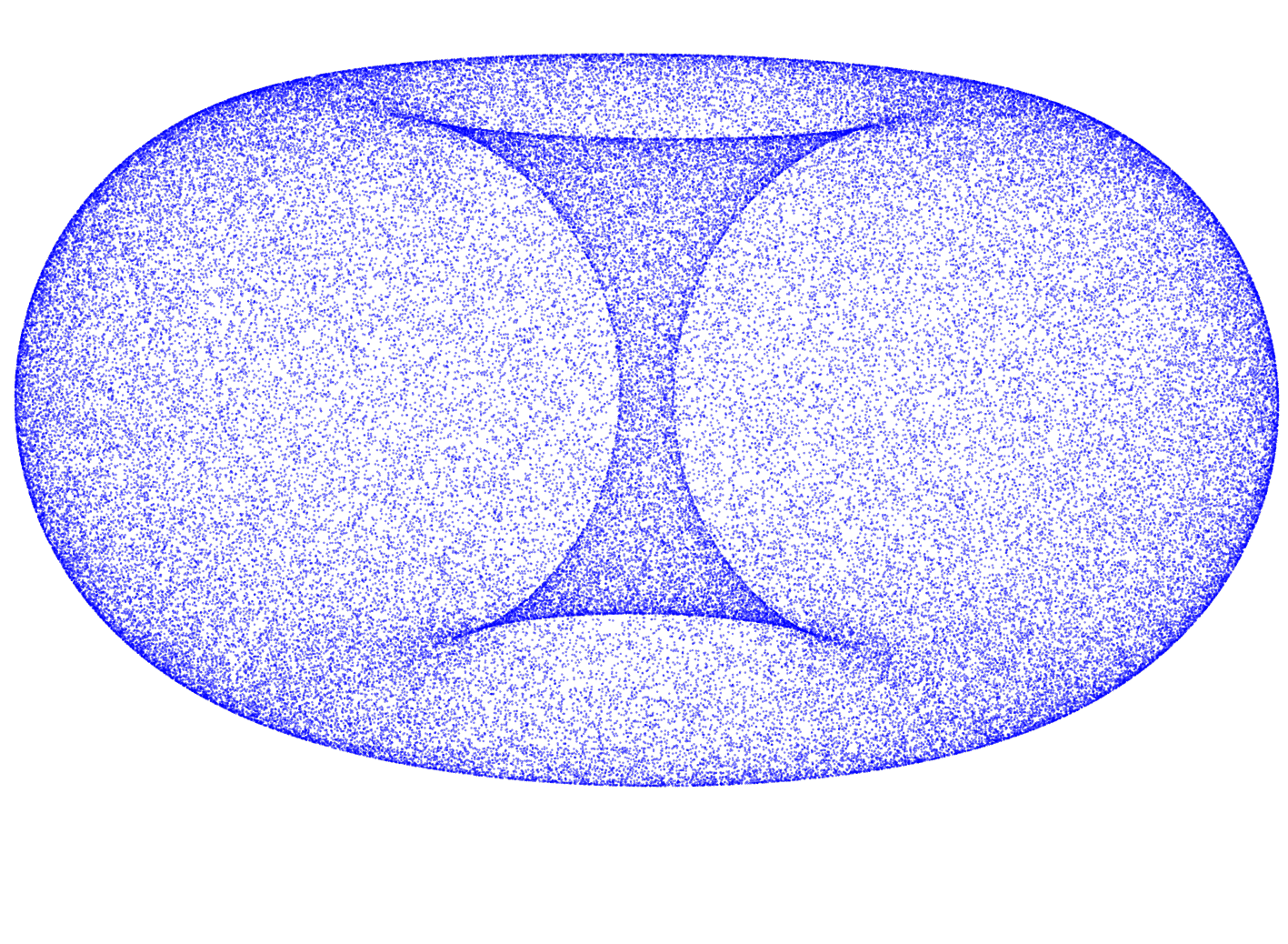}
	\caption{Results for a parametric surface defined as $\phi(s,t)= ((1.1 + \cos(s)) \cos(t), (1.1 + \cos(s)) \sin(t), \sin(s))$. \emph{Left:} Contour plot for the density of the stationary measure (i.e., the volume form of the Riemannian metric) in the parameter domain. \emph{Middle \& right:} Result of an approximate geodesic random walk based on $\operatorname{p-Ret}$ (using a single chart with periodic boundary conditions) with $100.000$ steps and stepsize $\varepsilon=0.5$, depicted in the parameter domain and in $\mathbb{R}^3$, respectively. Using an implementation in \textsc{Mathematica} on a standard laptop, the computation takes a few seconds. For a discussion of the relationship between the stationary measure and the empirical measure resulting from the random walk, see Section~\ref{ssec:inv-measure}.}
	\label{fig:param-walk-1}
\end{figure*}

In summary, using $\tilde{v}$ as computed in Algorithm \ref{alg:sampling}, setting $\bar v = \tilde{v}$ in Step 3 of Algorithm~\ref{alg:retraction}, and using
the retraction $\operatorname{p-Ret}$ from~\eqref{eq:ret-param}, provides an efficient method for computing random walks for parameterized manifolds. See Table \ref{tab:complexity} for a comparison between geodesic random walks and retraction-based random walks, showing that the latter are significantly faster than the former. Notice that the attendant algorithm might require to update the respective local chart during the simulated random walk.
Results of a simulated random walk on a 2-torus (using a single chart with periodic boundary conditions) are presented in Figure~\ref{fig:param-walk-1}.

\begin{table}
	\centering
	\begin{tabular}{cccc} 
		\toprule
		\multirow{2}{*}{Dimension} & {Sampling} & Computing & Computing \\
		& (SVD) & geodesics & retractions \\
		\cmidrule{3-4}
		$m=2$ & 0.107s 
		& 6.112s & 0.083s (74x) \\ 
		$m=5$ & 0.193s
		& 13.151s & 0.190s (69x) \\
		$m=10$ & 0.322s
		& 43.581s & 0.675s (65x) \\
		$m=15$ & 0.516s
		& 125.081s & 1.994s (63x) \\
		$m=20$ & 0.873s
		& 319.442s & 4.987s (64x) \\
		\bottomrule
	\end{tabular}
	
	\vspace{0.2cm}
	
	\caption{Computation times for geodesic and retraction-based random walks on $m$-dimensional tori with a non-diagonal metric $(g_{ij})=B^{T} (\bar g_{ij}) B$, where $\bar g_{ii} = 1.5 + \cos (x_{m-(i-1)})$ for $i = 1, \dots, m$, $\bar g_{ij} = 0$ for $i\neq j$, and $B$ is a random orthogonal matrix. In each case $10\,000$ steps were calculated in varying dimensions with stepsize $\varepsilon = 0.1$. All computations were performed on a standard laptop in \textsc{Matlab}, using  \textsc{Matlab}'s method ode45 for solving the geodesic equation. The Christoffel symbols were computed numerically. 
	}
	\label{tab:complexity}
\end{table}

\subsection{Projection-based retractions}\label{ssec:example2}
Our second example of computationally efficient retractions is based on a step-and-project method for compact manifolds that are given as zero level sets of smooth functions. More specifically, we consider the setting where 
\begin{equation*}
f: \mathbb{R}^n \to \mathbb{R}^k
\end{equation*}
is a smooth function, $k<n$, and the $m$-dimensional manifold in question, with $m=(n-k)$, is given by
\begin{equation}\label{eq:implicit-ret}
M = \{x \in \mathbb{R}^n \; : \; f(x) =0 \} \ .
\end{equation}
We assume that $0$ is a regular value of $f$, i.e, that the differential $df$ has full rank along $M$. In this case, a retraction can be defined via
\begin{equation*}
\operatorname{\pi-Ret}_x (v) = \pi_M (x+v) \ ,
\end{equation*}
where $x\in M$, $v\in T_xM$, and $ \pi_M(y) = \argmin_{p\in M} \|p-y\|$ denotes the (closest point) projection to $M$. Notice that $ \pi_M(y)$ is well defined as long as the distance $d(y,M)$ is less then the so-called \emph{reach} of $M$. The reach of $M$ is the distance of $M$ to its medial axis, which in turn is defined as the set of those points in $\mathbb{R}^n$ that do not have a unique closest point on $M$. Smoothly embedded compact manifolds always have positive reach. 

The following result is important in our setup. For a proof, see, e.g., Theorem 4.10 in~\cite{adler2002}. 
\begin{lemma}
	$\operatorname{\pi-Ret}$ is a second-order retraction.
\end{lemma}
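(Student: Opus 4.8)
The plan is to verify the three defining properties in turn, the first two being the retraction axioms of Definition~\ref{def:retraction} and the third being the second-order condition~\eqref{eq:second-order}, which for a submanifold amounts to showing that $\frac{d^2}{d\tau^2}\operatorname{\pi-Ret}_x(\tau v)\lvert_{\tau=0}\in\mathcal{N}_xM$. The foundation for all of this is the regularity of the projection: since $M$ is compact, smoothly embedded, and $0$ is a regular value of $f$, the reach of $M$ is positive, and on the open tubular neighborhood $\{y : d(y,M) < \operatorname{reach}(M)\}$ the closest-point projection $\pi_M$ is a well-defined smooth map. I would establish this either by invoking the tubular neighborhood theorem (the normal-bundle map $(p,\nu)\mapsto p+\nu$ is a diffeomorphism near the zero section) or, tailored to the implicit description, by applying the implicit function theorem to the Lagrange system $f(p)=0$, $p - y + df(p)^T\lambda = 0$ at the base point $(p,\lambda,y)=(x,0,x)$. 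The single geometric fact I will use repeatedly is the first-order optimality condition: the residual $y - \pi_M(y)$ is orthogonal to $T_{\pi_M(y)}M$, i.e.\ it lies in $\mathcal{N}_{\pi_M(y)}M$.

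Fix $x\in M$ and $v\in T_xM$. Since $x\in M$, for $\tau < \operatorname{reach}(M)$ the point $x+\tau v$ lies in the tubular neighborhood, so $\gamma(\tau):=\operatorname{\pi-Ret}_x(\tau v)=\pi_M(x+\tau v)$ is a well-defined smooth curve in $M$ with $\gamma(0)=\pi_M(x)=x$; this is the first axiom. I then introduce the residual $r(\tau):=x+\tau v-\gamma(\tau)$, which by the optimality condition satisfies $r(\tau)\in\mathcal{N}_{\gamma(\tau)}M$ for every $\tau$, and note $r(0)=0$. Differentiating the identity $x+\tau v=\gamma(\tau)+r(\tau)$ once gives $v=\dot\gamma(\tau)+\dot r(\tau)$. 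The key device is to write $r$ in a smooth orthonormal frame $\{n_1(\tau),\dots,n_k(\tau)\}$ of the normal spaces along $\gamma$, say $r(\tau)=\sum_a\rho_a(\tau)n_a(\tau)$; because $r(0)=0$ forces $\rho_a(0)=0$, every term of $\dot r(0)$ that differentiates the frame drops out, leaving $\dot r(0)=\sum_a\dot\rho_a(0)n_a(0)\in\mathcal{N}_xM$. Decomposing $v=\dot\gamma(0)+\dot r(0)$ into its tangential part $\dot\gamma(0)\in T_xM$ (since $\gamma$ stays in $M$) and its normal part $\dot r(0)$, and using that $v$ is itself tangent, I conclude $\dot r(0)=0$ and $\dot\gamma(0)=v$, which is the second axiom.

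For the second-order property I differentiate once more: $0=\ddot\gamma(\tau)+\ddot r(\tau)$, hence $\ddot\gamma(0)=-\ddot r(0)$. Expanding $\ddot r(0)$ in the same frame, the identities $\rho_a(0)=0$ and $\dot\rho_a(0)=0$ (the latter now available from $\dot r(0)=0$) annihilate every term involving derivatives of the $n_a$, so that $\ddot r(0)=\sum_a\ddot\rho_a(0)n_a(0)\in\mathcal{N}_xM$. Therefore $\ddot\gamma(0)\in\mathcal{N}_xM$, which is precisely the submanifold form of~\eqref{eq:second-order}, completing the argument. I expect the genuinely technical point to be the first step---smoothness of $\pi_M$ on a tubular neighborhood---since the remainder is an elementary frame computation whose entire force comes from the vanishing of $r$ and its first derivative at $\tau=0$; the local existence of a smooth normal frame along $\gamma$ is routine, and it is precisely the positivity of the reach that guarantees $\pi_M$ is single-valued and smooth, so that $\gamma$ and its derivatives exist in the first place.
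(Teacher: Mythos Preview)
Your argument is correct. The paper itself does not provide a proof of this lemma but simply refers the reader to Theorem~4.10 in~\cite{adler2002}; your self-contained verification via the normal-frame expansion of the residual $r(\tau)=x+\tau v-\gamma(\tau)$ supplies exactly what the paper omits. The computation is standard (it is essentially the argument behind the cited result and also appears, for instance, in the work of Absil and Malick on projection-like retractions), so there is no genuine methodological difference to compare---you have just written out explicitly what the paper delegates to a reference. The one place to be careful, as you note, is the smoothness of $\pi_M$ near $M$, and your appeal to the tubular neighborhood theorem (or the implicit function theorem applied to the Lagrange system) is the right justification.
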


It remains to specify the computation of a random unit direction $v\in T_xM$ as well as the implementation of the projection step. As for the former, a uniformly sampled unit tangent direction at some $x\in M$ can be computed by first sampling a randomly distributed unit vector $u \in \mathbb{S}^{n-1}$ in ambient space, and denoting its projection to $\mathrm{im} (df\lvert_{x})$ by $\tilde{u}$. Then 
\begin{equation}\label{eq:rand-dir-implicit}
\bar{v}= \frac{u-\tilde u}{\|u-\tilde u\|}
\end{equation}
yields the requisite uniformly distributed unit tangent vector. Finally, for computing the closest point projection of some given point $y\in \mathbb{R}^n$ to $M$, consider the Lagrangian 
\begin{equation*}
\mathcal{L}_y(z, \lambda) = \frac{1}{2}\lVert z- y\rVert^2 - \lambda^T f(z) \ ,
\end{equation*}
where $\lambda\in \mathbb{R}^{k}$ denotes the (vector-valued) Lagrange multiplier. Then Newton's method offers an efficient implementation for solving the Euler--Lagrange equations resulting from $\mathcal{L}_y$. In our implementation, we let Newton's method run until the threshold $\lvert f(z)\rvert/\|df\lvert_z\| < \varepsilon^3$ is satisfied. Notice that efficiency results from the fact that the point $y=x+\tau v$ is close to $M$ for $x\in M$ and $\tau$ small enough.

In summary, using $\bar{v}$ as computed in~\eqref{eq:rand-dir-implicit} for Step 3 of Algorithm~\ref{alg:retraction}, together with 
the retraction $\operatorname{\pi-Ret}$ from~\eqref{eq:implicit-ret}, provides an efficient method for computing random walks for implicitly given submanifolds. 
Results of a simulated random walk on the 2-torus are presented in Figure~\ref{fig:implicit-walk-1}.

\begin{remark}
	Our projection-based retraction is an instance of retractions induced by so-called \emph{normal foliations}. The latter are defined as foliations in a local neighborhood of $M= f^{-1}(0)$ whose leaves have dimension $k$ and intersect $M$ orthogonally. As shown in~\cite{zhang2020newton}, retractions induced by normal foliations are always of second order and therefore provide alternative approaches for sampling Brownian motion on implicit manifolds. Examples of retractions induced by normal foliations  include the gradient flow along $-\nabla f$ and Newton retractions based on the update step $\delta z = -(d f)^{\dagger}(z) f(z)$, where $(d f)^{\dagger}$ is the Moore--Penrose pseudo-inverse of the Jacobian $d f$.
\end{remark}

\begin{figure*}
	\centering
	\includegraphics[width=0.9\textwidth]{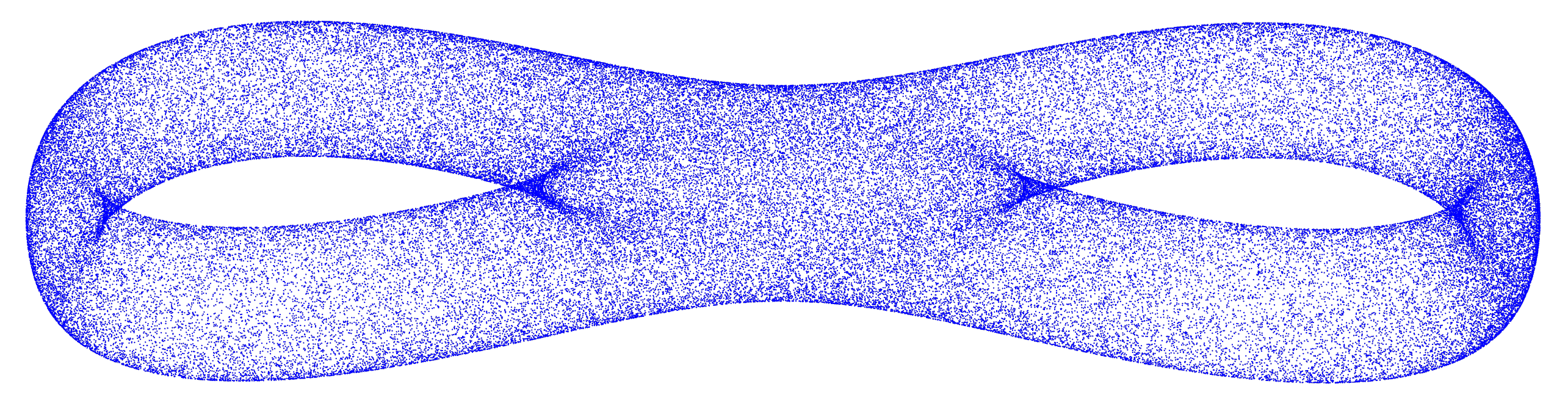}
	\caption{Result of a random walk using $\operatorname{\pi-Ret}$ for a genus two surface given as the zero-level set of $f:\mathbb{R}^3\to \mathbb{R}, \ f(x,y,z)=(x^2 (1 - x^2) - y^2)^2 + z^2 - 0.01$ with $100.000$ steps and stepsize $\varepsilon=0.1$. Using an implementation in \textsc{Mathematica} on a standard laptop, the computation takes less than a minute.}
	\label{fig:implicit-walk-1}
\end{figure*}

\subsection{Stationary measure}\label{ssec:inv-measure}
For second order retractions, we show below that the stationary measure 
of the process in Algorithm \ref{alg:retraction} converges to the stationary measure of the Brownian motion, which is  the normalized Riemannian volume form, see Theorem~\ref{thm:inv-measure}. Nonetheless, the \emph{empirical} approximation $\frac{1}{N}\sum_{i=1}^N \delta_{x_i^\varepsilon}$ of the Riemannian volume form using random walks -- e.g., in order to obtain results as in Figs.~\ref{fig:param-walk-1} \&~\ref{fig:implicit-walk-1} -- requires the stepsize $\varepsilon$ and the number of steps $N$ in Algorithm \ref{alg:retraction} to be chosen appropriately. An indicator at which time the empirical approximation is close to the stationary measure is the so called $\delta$-cover time, i.e., the time needed to hit any $\delta$-ball. Hence, as a rough heuristic choice of $\varepsilon$ and $N$, one may utilize \cite{dembo2003,dembo2004}. For a Brownian motion on a $2$-dimensional manifold $M$, the $\delta$-cover time $C_\delta$ of $M$, satisfies (\cite{dembo2004}, Theorem 1.3)
\begin{equation*}
\lim_{\delta\rightarrow 0}\frac{C_\delta}{(\log\delta)^2} = \frac{2A}{\pi} \quad\text{a.s.}\ ,
\end{equation*}
where $A$ denotes the Riemannian area of $M$.
Based on the observation that a second-order retraction converges to Brownian motion in the limit of vanishing stepsize $\varepsilon$, and using that the physical time of the walk resulting from Algorithm~\ref{alg:retraction} is given by $\varepsilon^2 N$, one may set $\varepsilon^2 N= C_\delta$. If $\delta$ is small enough, one may then choose $N$ according to
\begin{equation*}
N \geq \frac{2A}{\pi} \left(\frac{\log \delta}{\varepsilon}\right)^2\ .
\end{equation*}
For dimension $m>2$, the corresponding result in \cite{dembo2003}, Theorem 1.1 yields the heuristic choice
\begin{equation*}
N \geq m\kappa_M \frac{-\delta^{2-m}\log\delta}{\varepsilon^2}\ ,
\end{equation*}
where
\begin{equation*}
\kappa_M = \frac{2}{(m-2)\omega_m} V(M)\ ,
\end{equation*}
$V(M)$ denotes the volume of $M$, and $\omega_m$ the volume of the unit sphere in $\mathbb{R}^m$.

Such heuristics, however, do not quantify the error between the \emph{empirical measure} obtained by simulating a random walk and the Riemannian volume measure.
In the setting of Euclidean random walks $(Y_n)_{n\in\mathbb{N}}$ with invariant distribution $\nu$, it has been shown in \cite{kloeckner2020} that under some conditions on $Y$ the expectation of the $1$-Wasserstein distance between the empirical and the invariant measure asymptotically decreases according to the law
\begin{equation}\label{eq:conv-wasserstein}
\mathbb{ E}\left\lVert \frac1n \sum_{i=1}^n \delta_{Y_i} - \nu \right\rVert_{W^1} \lesssim \alpha \frac{\left(\log (\vartheta n)\right)^{\beta}}{(\vartheta n)^\gamma}
\end{equation}
as $N\to \infty$ with constants $\alpha,\beta,\gamma > 0$ depending on the dimension $m$ and $\vartheta \in (0,1)$ depending on the process $Y$. Our simulations, see Figure \ref{fig:convergence-speed}, followed by a simple fit to the model given by the right hand side of Equation \eqref{eq:conv-wasserstein} suggest that a similar qualitative behavior might be present for retraction-based random walks on Riemannian manifolds. We leave a deeper analysis for future research.

\begin{figure}
	\centering
	\begin{subfigure}[b]{0.475\textwidth}
		\centering
		\includegraphics[width=\textwidth]{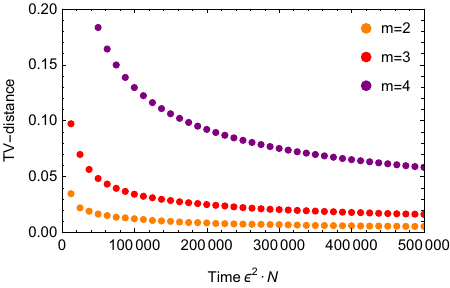}
		\caption{{\small Convergence speed in different dimensions with fixed stepsize $\varepsilon=0.25$. 
		}}    
	\end{subfigure}
	\hfill
	\begin{subfigure}[b]{0.475\textwidth}   
		\centering 
		\includegraphics[width=\textwidth]{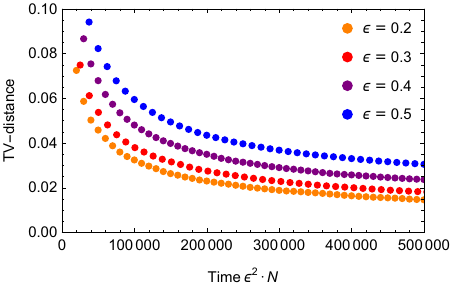}
		\caption{{\small Convergence speed of walks with different stepsizes in dimension $m=3$. 
		}}    
	\end{subfigure}
	\caption{Convergence speed of retraction-based random walks to the invariant measures on $m$-dimensional tori parametrized by $[0,2\pi]^m$ with periodic boundary conditions and equipped with the diagonal metric $g_{ii} = 1.5 + \cos (x_{m-(i-1)})$ for $i = 1, \dots , m$ and $g_{ij} = 0$ for $i\neq j$. Results are shown for varying dimension $m$ and fixed stepsize $\varepsilon$ (\emph{left}) as well as fixed dimension $m$ and varying stepsize $\varepsilon$ (\emph{right}). 
		Simulations were performed $10$ times for each combination of dimension and stepsize. The points display the arithmetic mean of the total variation distances between the invariant measure and the empirical measures resulting from a box count using $20^m$ equally sized cubes.
	}
	\label{fig:convergence-speed}
\end{figure}

\section{Convergence}\label{sec:convergence}
This section is devoted to the proof of Theorem \ref{thm:general-convergence}, which yields convergence in the Skorokhod topology of the process constructed in Algorithm \ref{alg:retraction} as the stepsize $\varepsilon\rightarrow 0$.
The Skorokhod space $\mathcal{D}_M[0,\infty)$ is the space of all càdlàg, i.e., right-continuous functions with left-hand limits from $[0,\infty)$ to $M$, equipped with a metric turning $\mathcal{D}_M[0,\infty)$ into a complete and separable metric space, and hence a Polish space (see e.g. \cite{ethier1986}, Chapter 3.5). The topology induced by this metric is the Skorokhod topology, which generalizes the topology of local uniform convergence for continuous functions $[0,\infty)\rightarrow M$ in a natural way. For local uniform convergence we have uniform convergence on any compact time interval $K\subset [0,\infty)$. In the Skorokhod topology functions with discontinuities converge if the times and magnitudes of the discontinuities converge (in addition to local uniform convergence of the continuous parts of the functions). Thus,  
$\mathcal{D}_M[0,\infty)$ is a natural and -- due to its properties -- also a convenient space for convergence of discontinuous stochastic processes.

With reference to the notation used in Theorem~\ref{thm:general-convergence}, notice that the process $X^\varepsilon = \big(X_{\lfloor\varepsilon^{-2}t\rfloor}^\varepsilon\big)_{t\geq 0}$ has deterministic jump times and is therefore not Markov as a process in continuous time. In order to apply the theory of Markov processes, let $\left(\eta_t\right)_{t\geq 0}$ be a Poisson process with rate $\varepsilon^{-2}$ and define the pseudo-continuous process $Z^\varepsilon = (Z_t^\varepsilon)_{t\geq 0}$ by $Z_t^\varepsilon := X_{\eta_t}^\varepsilon$ for any $t\geq 0$, which is Markov since the Poisson process is Markov. Then, the convergence of $Z^\varepsilon$ implies convergence of $X^\varepsilon$ to the same limit by the law of large numbers, see \cite{kallenberg2002}, Theorem~19.28. First we restate Theorem \ref{thm:general-convergence}, using an equivalent formulation based on the process $Z^\varepsilon$:

\begin{theorem}\label{thm:convergence}
	Consider the sequence of random variables $(X_i^\varepsilon)_{i\in\mathbb{N}}$ obtained by the construction from Algorithm \ref{alg:retraction}. The process $Z^\varepsilon = \left( X^\varepsilon_{\eta_t}\right)_{t\geq 0}$ converges in distribution in the Skorokhod topology to the $L$-diffusion $Z$, i.e., the stochastic process with generator $L$ defined in \eqref{eq:generator}. This means that for any continuous and bounded function 
	$f:\mathcal{D}_M[0,\infty) \rightarrow \mathbb{ R}$ we have
	$$\mathbb{E}(f((Z_t^\varepsilon)_{t\geq 0})) \rightarrow \mathbb{E}(f((Z_t)_{t\geq 0})) \ , \quad \varepsilon \rightarrow 0 \ .$$
\end{theorem}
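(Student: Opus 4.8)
The plan is to work with the time-changed, Markovian process $Z^\varepsilon = (X^\varepsilon_{\eta_t})_{t\geq 0}$ and to establish convergence at the level of infinitesimal generators, from which weak convergence in the Skorokhod space $\mathcal{D}_M[0,\infty)$ follows by a standard approximation theorem for Markov processes (e.g.\ the Trotter--Kurtz theorem, \cite{ethier1986}, Ch.~1, together with the weak-convergence criteria of Ch.~4). Since $M$ is compact, $C(M)$ equipped with the supremum norm is a Banach space, and the pure-jump process $Z^\varepsilon$ has bounded generator
\begin{equation*}
(L^\varepsilon f)(x) = \varepsilon^{-2}\left(\frac{1}{\omega_m}\int_{S_x} f\big(\operatorname{Ret}_x(\varepsilon v)\big)\,\text{d}v - f(x)\right),\qquad f\in C(M),
\end{equation*}
because jumps occur at rate $\varepsilon^{-2}$ and a jump from $x$ lands at $\operatorname{Ret}_x(\varepsilon v)$ with $v$ uniform on $S_x$. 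Two things must be shown: (i) $L^\varepsilon f\to Lf$ uniformly for $f$ in a core of $L$, and (ii) that $L$ from \eqref{eq:generator} genuinely generates a well-posed diffusion on $M$ with continuous paths.

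For (i) I would fix $f\in C^\infty(M)$ and Taylor-expand the curve $\tau\mapsto f(\operatorname{Ret}_x(\tau v))$ to second order at $\tau=0$. Using the two defining properties of a retraction (Definition~\ref{def:retraction}), namely $\operatorname{Ret}_x(0)=x$ and $\frac{d}{d\tau}\operatorname{Ret}_x(\tau v)|_{\tau=0}=v$, the first derivative is $df|_x(v)$ and the second derivative equals $\nabla df|_x(v,v) + df|_x(a_x(v))$, where $a_x(v):=\frac{D}{d\tau}\frac{d}{d\tau}\operatorname{Ret}_x(\tau v)\big|_{\tau=0}$ is the covariant acceleration appearing in \eqref{eq:generator}. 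Multiplying by $\varepsilon^{-2}$ and integrating over the sphere, the first-order contribution $\frac{1}{\omega_m}\int_{S_x}df|_x(v)\,\text{d}v$ vanishes by the antipodal symmetry $v\mapsto -v$ of $S_x$, the sphere-average of the Hessian produces the trace $\operatorname{tr}_g\nabla df=\Delta_g f$ (up to the dimensional constant fixed by the time normalization), and the acceleration term produces the second summand of \eqref{eq:generator}; this gives $L^\varepsilon f\to Lf$ pointwise. The remainder is $\mathcal{O}(\varepsilon^3)$ before, hence $\mathcal{O}(\varepsilon)$ after, the $\varepsilon^{-2}$ prefactor; because $M$ and its unit tangent sphere bundle are compact and both $\operatorname{Ret}$ and $f$ are smooth, the third-order Taylor remainder is bounded uniformly in $(x,v)$, so the convergence is in fact uniform in $x$. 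In particular $\sup_\varepsilon\|L^\varepsilon f\|_\infty<\infty$.

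For (ii), the operator $L$ in \eqref{eq:generator} has principal part a positive multiple of $\Delta_g$ and smooth lower-order coefficients, hence is elliptic with smooth coefficients on the compact manifold $M$. Consequently $L$ generates a Feller diffusion with continuous paths, $C^\infty(M)$ is a core for its generator, and the associated martingale problem is well-posed. With (i) and (ii) in hand, the Trotter--Kurtz approximation theorem yields strong semigroup convergence $T^\varepsilon_t f\to T_t f$, uniformly on compact time intervals, hence convergence of the finite-dimensional distributions of $Z^\varepsilon$ to those of the $L$-diffusion $Z$ (assuming convergence of initial laws). Relative compactness of $(Z^\varepsilon)$ in $\mathcal{D}_M[0,\infty)$ follows from compactness of the state space $M$ together with the uniform bound $\sup_\varepsilon\|L^\varepsilon f\|_\infty<\infty$ via the standard tightness criterion (\cite{ethier1986}, Ch.~3 and Ch.~4). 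Combining tightness with convergence of the finite-dimensional distributions gives weak convergence of $Z^\varepsilon$ to $Z$ in the Skorokhod topology; since the limit generator is purely second-order, the limit has continuous sample paths, as recorded in the excerpt.

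I expect the main obstacle to be the clean passage from generator convergence to path-space weak convergence rather than the computation itself: one must verify that $C^\infty(M)$ is a core for $L$, that the limiting martingale problem is well-posed, and that tightness upgrades finite-dimensional convergence to convergence in $\mathcal{D}_M[0,\infty)$. The geometric identity splitting $\frac{d^2}{d\tau^2}f(\operatorname{Ret}_x(\tau v))$ into a Hessian term and an acceleration term, averaged over $S_x$, is the conceptual heart and is essentially routine once the retraction axioms are used; the delicate points are instead the \emph{uniform} (not merely pointwise) control of the Taylor remainder over the compact unit sphere bundle and the careful invocation of the abstract Markov-process machinery that makes the limit process well-defined.
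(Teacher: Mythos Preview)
Your argument is correct and follows the same overall strategy as the paper---prove uniform convergence of generators on a core, then invoke the standard equivalence between generator convergence and weak convergence in $\mathcal{D}_M[0,\infty)$---but the execution differs in two places. First, where you Taylor-expand $\tau\mapsto f(\operatorname{Ret}_x(\tau v))$ directly and then average over $S_x$, the paper instead inserts the \emph{geodesic} transition kernel $P^\varepsilon f(x)=\frac{1}{\omega_m}\int_{S_x}f(\operatorname{Exp}_x(\varepsilon v))\,\text{d}v$ as an intermediate step, bounding $\lVert\varepsilon^{-2}(P^\varepsilon f - f) - \tfrac12\Delta_g f\rVert_\infty$ by citing J{\o}rgensen and then separately bounding $\lVert\varepsilon^{-2}(U^\varepsilon f - P^\varepsilon f) - \tilde G f\rVert_\infty$ via a second-order Taylor expansion with Lagrange remainder. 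This decomposition isolates exactly what the retraction's covariant acceleration contributes beyond the geodesic walk; your direct route is more self-contained but obscures that splitting. Second, the paper works on the core $C^{(2,\alpha)}(M)$ (citing J{\o}rgensen for the core property) and uses H\"older continuity of $\operatorname{Hess} f$ to control the Lagrange-form remainder, obtaining an $O(\varepsilon^\alpha)$ error, whereas you take $C^\infty(M)$ and rely on smoothness plus compactness of the unit-sphere bundle for a uniform $O(\varepsilon)$ bound. Finally, the paper packages the passage from generator convergence to Skorokhod convergence in a single citation (Kallenberg, Thm.~19.25), which already subsumes the tightness argument you spell out separately via Trotter--Kurtz plus a compactness/tightness criterion; the two routes are equivalent.
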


\noindent
Certainly one cannot, in general, deduce convergence of stationary measures from time-local convergence statements about paths. However, standard arguments from the theory of Feller processes indeed allow to do so. This yields the following statement regarding stationary measures.

\begin{theorem}\label{thm:inv-measure}
	Let $\mu_\varepsilon$ be the stationary measure of the retraction-based random walk with stepsize $\varepsilon$. Then the weak limit of $\left(\mu_\varepsilon\right)_{\varepsilon>0}$ as $\varepsilon\rightarrow 0$ is the stationary measure of the $L$-diffusion, with generator $L$ defined in \eqref{eq:generator}. In the case of second-order retractions, the limit is the Riemannian volume measure.
\end{theorem}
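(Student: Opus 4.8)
The plan is to deduce the convergence of stationary measures from the path convergence already established in Theorem~\ref{thm:convergence}, exploiting that both the approximating random walks and the limiting $L$-diffusion are Feller processes on the compact manifold $M$. First I would record that, since $M$ is compact, the space of probability measures $\mathcal{P}(M)$ is itself compact in the weak topology; hence the family $(\mu_\varepsilon)_{\varepsilon>0}$ is automatically tight, and it suffices to show that every weak subsequential limit coincides with the (unique) stationary measure of the $L$-diffusion. Let $\mu$ be such a subsequential limit, along $\varepsilon_n\to 0$.

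The key step is to pass stationarity through the limit at the level of semigroups. Denote by $(P^\varepsilon_t)$ the transition semigroup of $Z^\varepsilon$ and by $(P_t)$ that of the $L$-diffusion. Stationarity of $\mu_\varepsilon$ reads $\int_M P^\varepsilon_t f \, \mathrm{d}\mu_\varepsilon = \int_M f \, \mathrm{d}\mu_\varepsilon$ for every $f\in C(M)$ and every $t\ge 0$. Theorem~\ref{thm:convergence} gives weak convergence of the processes, which for Feller processes on a compact space upgrades to convergence of the semigroups: $P^{\varepsilon_n}_t f \to P_t f$ uniformly on $M$ for each fixed $f\in C(M)$ and $t\ge 0$. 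I would combine this uniform convergence of the integrand with the weak convergence $\mu_{\varepsilon_n}\to\mu$ of the measures; since $\sup_x |P^{\varepsilon_n}_t f(x) - P_t f(x)|\to 0$ and $P_t f\in C(M)$, one can interchange the two limits to obtain $\int_M P_t f\,\mathrm{d}\mu = \int_M f\,\mathrm{d}\mu$ for all $f\in C(M)$ and all $t$. Thus $\mu$ is stationary for the $L$-diffusion. Since the $L$-diffusion is a nondegenerate (elliptic) diffusion on a connected compact manifold, its stationary measure is unique, which forces $\mu$ to be that measure and hence $\mu_\varepsilon\to\mu$ along the full family.

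For the second assertion, I would invoke the corollary following Theorem~\ref{thm:general-convergence}: a second-order retraction is harmonic, so the integral term in the generator~\eqref{eq:generator} vanishes and $L=\tfrac12\Delta_g$. The stationary measure of Brownian motion generated by $\tfrac12\Delta_g$ on a compact $M$ is the normalized Riemannian volume measure, since $\Delta_g$ is self-adjoint with respect to the volume form and annihilates constants, giving $\int_M \Delta_g f\,\mathrm{d}\mathrm{vol}=0$ for all smooth $f$; this is exactly the stationarity condition $\int_M L f\,\mathrm{d}\mu = 0$. Combined with uniqueness, this identifies the limit as claimed.

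The main obstacle I anticipate is the semigroup-convergence step, specifically the justification that weak convergence of the processes in the Skorokhod topology yields convergence of $P^{\varepsilon_n}_t f$ that is strong enough (uniform, or at least convergence of the integrals against the converging measures) to commute with the weak convergence of $\mu_{\varepsilon_n}$. The subtlety is that one is interchanging two limits, $\varepsilon_n\to 0$ in the integrand and $\mu_{\varepsilon_n}\to\mu$ in the measure, so a careful appeal to the Feller property (which gives $P_t:C(M)\to C(M)$ and continuity in $t$) together with compactness of $M$ is needed to rule out a residual defect; this is where the standard Feller-process machinery, rather than the bare path convergence, does the real work.
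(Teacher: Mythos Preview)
Your proposal is correct and follows essentially the same route as the paper: compactness of $M$ gives tightness, a subsequence argument reduces to identifying any weak limit as stationary for the $L$-diffusion, and uniform semigroup convergence (equivalently, the generator convergence of Lemma~\ref{lem:generator-convergence} via \cite{kallenberg2002}, Thm.~19.25) lets stationarity pass to the limit. The paper packages your explicit stationarity-passing step into a citation of \cite{ethier1986}, Chapter~4, Thm.~9.10, and starts directly from Lemma~\ref{lem:generator-convergence} rather than looping back through Theorem~\ref{thm:convergence}, but the substance is the same.
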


\begin{remark}
	Notice that due to the approximation of the exponential map, the process $Z^\varepsilon$ is \emph{not in general reversible} and the generator of $Z^\varepsilon$ is not in general self-adjoint. This is relevant since the stationary measure corresponds to the kernel of the \emph{adjoint} of the generator.
\end{remark}

Our proof of Theorem~\ref{thm:convergence} and Theorem~\ref{thm:inv-measure} (presented below) hinges on convergence of generators that describe the infinitesimal evolution of Markov processes, see, e.g., \cite{ethier1986}, Chapter 4. We show that the generator of $Z^\varepsilon$ converges to the generator $L$ defined in \eqref{eq:generator}.
The generator of the process $Z^\varepsilon$ is spelled out in Lemma \ref{lem:ret-generator}, and the convergence of this generator to $L$ is treated in Lemma \ref{lem:generator-convergence}.

The following result is standard for transition kernels $U^\varepsilon$ on compact state spaces, see, e.g., \cite{ethier1986}, Chapter 8.3:

\begin{lemma}\label{lem:ret-generator}
	The process $(Z_t^\varepsilon)_{t\geq 0}$ is Feller, and its generator is given by
	\begin{equation}
	L^\varepsilon f = \frac{1}{\varepsilon^2}(U^\varepsilon f - f)
	\end{equation}
	for all $f\in C(M)$, the continuous real valued functions on $M$. Here, $$(U^\varepsilon f)(x) =  \frac{1}{\omega_m}\int_{S_x} f(\operatorname{Ret}_x (\varepsilon \sqrt{m}\, \bar{v})) \text{d}\bar{v}$$ and $\omega_m$ is the volume of the unit $(m-1)$-sphere.
\end{lemma}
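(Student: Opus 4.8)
The plan is to recognize $Z^\varepsilon$ as a Poissonized (``continuized'') version of the discrete chain $(X_i^\varepsilon)$ and to read off its generator from the explicit form of the transition semigroup. First I would observe that the one-step transition operator of the discrete chain is exactly $U^\varepsilon$: conditioning on $X_{i-1}^\varepsilon = x$, the next state is $\operatorname{Ret}_x(\varepsilon v)$ with $v$ uniform on $S_x$, so $\mathbb{E}[f(X_i^\varepsilon)\mid X_{i-1}^\varepsilon = x] = \frac{1}{\omega_m}\int_{S_x} f(\operatorname{Ret}_x(\varepsilon v))\,\mathrm{d}v = (U^\varepsilon f)(x)$. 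Since $\eta_t$ is Poisson with rate $\varepsilon^{-2}$ and independent of the chain, conditioning on $\eta_t = n$ gives $X_{\eta_t}^\varepsilon = X_n^\varepsilon$, whose conditional law is obtained by applying $U^\varepsilon$ $n$ times. Writing $(P_t^\varepsilon f)(x) = \mathbb{E}_x[f(Z_t^\varepsilon)]$ for the transition semigroup and summing over $n$ yields
\begin{equation*}
P_t^\varepsilon f = e^{-\varepsilon^{-2}t}\sum_{n=0}^\infty \frac{(\varepsilon^{-2}t)^n}{n!}(U^\varepsilon)^n f = e^{\, t\varepsilon^{-2}(U^\varepsilon - I)} f \ .
\end{equation*}

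Next I would verify that $U^\varepsilon$ is a bounded linear operator on the Banach space $C(M)$. It is clearly a Markov operator---positive, with $U^\varepsilon 1 = 1$---hence a contraction, $\lVert U^\varepsilon\rVert \leq 1$. Consequently $L^\varepsilon := \varepsilon^{-2}(U^\varepsilon - I)$ is bounded on all of $C(M)$, and the exponential series $\sum_k \frac{t^k}{k!}(L^\varepsilon)^k$ converges in operator norm and defines a norm-continuous (hence strongly continuous) semigroup whose infinitesimal generator is precisely $L^\varepsilon$, with domain all of $C(M)$. The identification $P_t^\varepsilon = e^{tL^\varepsilon}$ from the previous step then shows that $L^\varepsilon = \varepsilon^{-2}(U^\varepsilon - I)$ is the generator of $Z^\varepsilon$, matching the claimed formula for every $f\in C(M)$. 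Strong continuity at $t = 0$, i.e.\ $\lVert P_t^\varepsilon f - f\rVert \to 0$, is automatic because $L^\varepsilon$ is bounded.

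To establish the Feller property it remains to check that $P_t^\varepsilon$ maps $C(M)$ into $C(M)$; by the series representation this reduces to showing $U^\varepsilon: C(M)\to C(M)$. Positivity and the contraction property are immediate from those of $U^\varepsilon$, and since $M$ is compact these, together with strong continuity, give a Feller semigroup. I expect the continuity of $x\mapsto (U^\varepsilon f)(x)$ to be the main (and essentially only) genuine obstacle, because the domain of integration $S_x\subset T_xM$ varies with $x$. The clean way around this is to pull the integral back to a fixed reference sphere: using that $M$ is compact and $TM$ is locally trivial, I would choose a local orthonormal frame, i.e.\ a smoothly varying linear isometry $E(x):\mathbb{R}^m\to T_xM$ mapping $\mathbb{S}^{m-1}$ onto $S_x$, so that
\begin{equation*}
(U^\varepsilon f)(x) = \frac{1}{\omega_m}\int_{\mathbb{S}^{m-1}} f\big(\operatorname{Ret}_x(\varepsilon\, E(x)w)\big)\,\mathrm{d}w \ ,
\end{equation*}
the change of measure being trivial since $E(x)$ is an isometry. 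As $\operatorname{Ret}$ is smooth and $f$ is continuous, the integrand depends continuously on $x$ uniformly in $w\in\mathbb{S}^{m-1}$, and continuity of $U^\varepsilon f$ then follows from dominated convergence. This is exactly the standard argument referenced from \cite{ethier1986}, Chapter 8.3, so I would keep this verification brief.
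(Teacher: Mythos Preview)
Your proposal is correct and follows precisely the standard pseudo-Poisson construction that the paper defers to by citing \cite{ethier1986}, Chapter 8.3, rather than proving the lemma explicitly. Your identification of $U^\varepsilon$ as the one-step kernel, the Poissonization formula $P_t^\varepsilon = e^{t\varepsilon^{-2}(U^\varepsilon - I)}$, and the local-frame argument for $U^\varepsilon\colon C(M)\to C(M)$ are exactly the ingredients one finds in that reference, so there is nothing to contrast.
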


Notice that Lemma \ref{lem:ret-generator} additionally states that the process $Z^\varepsilon$ satisfies the Feller property, i.e., that its semigroup is a contraction semigroup that also fulfills a right-continuity property. Therefore, in particular, the process $Z^\varepsilon$ is Markov.

In the following, $C^{(2,\alpha)}(M)$, $0<\alpha\leq 1$ denotes the space of two times differentiable functions on $M$ with $\alpha$-Hölder continuous second derivative.

\begin{lemma}\label{lem:generator-convergence}
	For any $f\in C^{(2,\alpha)}(M)$ with $0 < \alpha \leq 1$,
	\begin{equation*}
	\left\lVert L^\varepsilon f - L f\right\rVert_\infty \rightarrow 0
	\end{equation*}
	as $\varepsilon\rightarrow 0$, where $L^\varepsilon$ is the operator from Lemma \ref{lem:ret-generator} and $L$ as defined in \eqref{eq:generator}.
\end{lemma}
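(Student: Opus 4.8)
The plan is to establish the pointwise (indeed uniform) convergence by performing a Taylor expansion of $f\circ\operatorname{Ret}_x$ in the stepsize $\varepsilon$ and carefully identifying the limiting terms. Fix $x\in M$ and $f\in C^{(2,\alpha)}(M)$. By Lemma~\ref{lem:ret-generator}, the quantity to analyze is
\begin{equation*}
(L^\varepsilon f)(x) = \frac{1}{\varepsilon^2}\,\frac{1}{\omega_m}\int_{S_x}\bigl(f(\operatorname{Ret}_x(\varepsilon v)) - f(x)\bigr)\,\mathrm{d}v.
\end{equation*}
First I would fix a single coordinate chart around $x$ (or work in normal coordinates) and write $c_v(\varepsilon) := \operatorname{Ret}_x(\varepsilon v)$, a smooth curve with $c_v(0)=x$. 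Using the retraction axioms from Definition~\ref{def:retraction}, namely $c_v(0)=x$ and $\dot c_v(0)=v$, together with smoothness of $\operatorname{Ret}$, I would Taylor-expand $c_v(\varepsilon)$ to second order: $c_v(\varepsilon) = x + \varepsilon v + \tfrac{\varepsilon^2}{2}\,\ddot c_v(0) + \mathcal{O}(\varepsilon^3)$, where $\ddot c_v(0) = \tfrac{d^2}{d\tau^2}\operatorname{Ret}_x(\tau v)\big\lvert_{\tau=0}$. The error is uniform in $x$ and $v$ because $M$ is compact and $\operatorname{Ret}$ is smooth on the (compact) unit sphere bundle.

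Next I would substitute this expansion into $f$ using a second-order Taylor expansion of $f$ at $x$. Writing everything in coordinates, the increment becomes
\begin{equation*}
f(c_v(\varepsilon)) - f(x) = \varepsilon\,df\lvert_x(v) + \frac{\varepsilon^2}{2}\Bigl(df\lvert_x(\ddot c_v(0)) + \operatorname{Hess}_{\mathrm{eucl}}f\lvert_x(v,v)\Bigr) + o(\varepsilon^2),
\end{equation*}
where $\operatorname{Hess}_{\mathrm{eucl}}$ is the coordinate (flat) Hessian. Dividing by $\varepsilon^2$ and integrating over $S_x$, the linear term $\int_{S_x} df\lvert_x(v)\,\mathrm{d}v$ vanishes by symmetry of the sphere under $v\mapsto -v$. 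The remaining two $\mathcal{O}(\varepsilon^2)$ contributions survive. The term involving $\ddot c_v(0)$ yields precisely the second summand in \eqref{eq:generator}, upon recognizing that $df\lvert_x(\ddot c_v(0)) = df\lvert_x\bigl(\tfrac{D}{d\tau}\tfrac{d}{d\tau}\operatorname{Ret}_x(\tau v)\big\lvert_{\tau=0}\bigr)$, since the covariant and ordinary second derivatives of the curve differ only by Christoffel terms that themselves lie in the tangent space and whose $df$-image agrees. For the Hessian term I would use the standard identity that the spherical average of the flat Hessian, when combined with the Christoffel correction coming from the covariant-vs-ordinary discrepancy, reconstitutes the Laplace--Beltrami operator: $\tfrac{1}{\omega_m}\int_{S_x}\operatorname{Hess}_{\mathrm{eucl}}f\lvert_x(v,v)\,\mathrm{d}v$ plus the Christoffel contraction equals $\tfrac{1}{m}(\Delta_g f)(x)$ up to the normalization absorbed in $\omega_m$, giving the $\tfrac12\Delta_g f$ term. (Cleanly, one may simply carry out the whole computation in geodesic normal coordinates at $x$, where the Christoffel symbols vanish at $x$, the flat and covariant Hessians coincide at $x$, and $\tfrac{1}{\omega_m}\int_{S_x}\partial_i\partial_j f\,v^iv^j\,\mathrm{d}v = \tfrac{1}{m}\delta^{ij}\partial_i\partial_j f = \tfrac1m\Delta_g f(x)$; the factor is reconciled by the convention that $\tfrac{1}{2\omega_m}\int_{S_x}$ in \eqref{eq:generator} is set up to produce $\tfrac12\Delta_g$.)

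The main obstacle is \emph{uniformity} of the error, not the pointwise limit. The Taylor remainder for $f$ is only controlled via the $\alpha$-Hölder modulus of the second derivatives, so the $o(\varepsilon^2)$ term is really of size $\mathcal{O}(\varepsilon^{2+\alpha})$ times the Hölder seminorm, and I must verify that after division by $\varepsilon^2$ this tends to zero \emph{uniformly in $x$}. This is where compactness of $M$ is essential: it provides a uniform Hölder bound $[f]_{C^{2,\alpha}}$, a uniform bound on all derivatives of $\operatorname{Ret}$ over the compact unit sphere bundle, and a uniform lower bound on the injectivity radius so that a fixed finite atlas suffices and all coordinate-dependent quantities (Christoffel symbols, metric eigenvalues) are uniformly bounded. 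Concretely I would split the increment into the exact Taylor polynomial plus remainder, bound the remainder by $C\,[f]_{C^{2,\alpha}}\,\lvert c_v(\varepsilon)-x\rvert^{2+\alpha} \le C'\varepsilon^{2+\alpha}$ uniformly, and conclude $\lVert L^\varepsilon f - Lf\rVert_\infty = \mathcal{O}(\varepsilon^\alpha)\to 0$. A minor technical point to handle with care is passing between the ambient/coordinate expansion and the intrinsic covariant objects appearing in \eqref{eq:generator}; working in normal coordinates at each base point, while tracking that the resulting bounds remain uniform over the finite atlas, keeps this bookkeeping clean.
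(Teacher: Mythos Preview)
Your argument is correct and yields the same $\mathcal{O}(\varepsilon^\alpha)$ rate, but the paper organizes the proof differently in two respects. First, it inserts the geodesic transition kernel $P^\varepsilon f(x) = \tfrac{1}{\omega_m}\int_{S_x} f(\operatorname{Exp}_x(\varepsilon v))\,\mathrm{d}v$ and splits $L^\varepsilon f - Lf$ into $\varepsilon^{-2}(P^\varepsilon f - f) - \tfrac12\Delta_g f$, which is dispatched by citing J{\o}rgensen's Proposition~2.3, plus $\varepsilon^{-2}(U^\varepsilon f - P^\varepsilon f) - \tilde Gf$, which isolates the retraction correction; you instead expand $f\circ\operatorname{Ret}_x$ directly and thereby reprove the J{\o}rgensen piece from scratch. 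Second, for the remaining term the paper Taylor-expands the \emph{scalar} functions $\tau\mapsto f(\operatorname{Ret}_x(\tau v))$ and $\tau\mapsto f(\operatorname{Exp}_x(\tau v))$ intrinsically to second order with Lagrange remainder, using that $\tfrac{d^2}{d\tau^2}f(\operatorname{Ret}_x(\tau v)) = \operatorname{Hess}f(v_\tau,v_\tau) + df(\tfrac{D}{d\tau}v_\tau)$; H\"older continuity of this expression in $\tau$ then gives the uniform bound without any coordinates. Your route---expand the curve in local coordinates, then expand $f$, then reconcile with covariant objects via normal coordinates---is more hands-on and forces you to confront the covariant-versus-ordinary second derivative and the $\Delta_g$ normalization that the paper simply outsources. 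The paper's decomposition is thus more modular and coordinate-free; yours is more self-contained but carries exactly the extra bookkeeping you flag at the end.
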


\begin{proof}
	Let $P^\varepsilon$ be the transition kernel of the geodesic random walk, i.e.,
	\begin{equation*} 
	(P^\varepsilon f)(x) = \frac{1}{\omega_m}\int_{S_x}f(\operatorname{Exp}_x(\varepsilon \sqrt{m}\, \bar{v}))\text{d}\bar{v} \ .
	\end{equation*}
	For $x\in M$ and $v\in T_x M$ define
	\begin{equation*}
	(Gf)(x,v) := 
	\frac{1}{2} df\big\lvert_x \left(\frac{D}{d\tau} \frac{d}{d\tau} \operatorname{Ret}_x (\tau v)\right)\Bigg\lvert_{\tau = 0} 
	\end{equation*}
	and
	\begin{equation*}
	(\tilde Gf)(x) := 
	\frac{m}{\omega_m}\int_{S_x} (Gf)(x,\bar{v})\text{d}\bar{v} \ .
	\end{equation*}
	Then, since $Lf =\frac12 \Delta_g f+ \tilde Gf$,
	\begin{align*}
	\left\lVert L^\varepsilon f - Lf\right\rVert_\infty \leq \left\lVert \frac{1}{\varepsilon^2}(U^\varepsilon f - P^\varepsilon f)-\tilde{G}f \right\rVert_\infty 
	+ \left\lVert \frac{1}{\varepsilon^2} (P^\varepsilon f - f) - \frac12 \Delta_g f\right\rVert_\infty\text{.}
	\end{align*}
	Proposition 2.3 in \cite{jorgensen1975} proves that the second summand on the right hand side converges to $0$ as $\varepsilon\rightarrow 0$. Let $\tilde\varepsilon := \sqrt{m}\,\varepsilon$. Then
	\begin{align*}
	&\left\lvert\frac{1}{\varepsilon^2}(U^\varepsilon f(x) - P^\varepsilon f(x)) - (\tilde{G}f)(x)\right\rvert \\
	\leq &\frac{m}{\omega_m}\int_{S_x} \Big\lvert \frac{1}{\tilde\varepsilon^2}[f(\operatorname{Ret}_x (\tilde\varepsilon \bar{v})) -f(\operatorname{Exp}_x (\tilde\varepsilon \bar{v}))] - (Gf)(x,\bar{v})\Big\rvert \text{d}\bar{v}\ .
	\end{align*}
	Now consider the Taylor expansions of the functions $f\circ \operatorname{Ret}_x$ and $f\circ \operatorname{Exp}_x$ in $\tilde\varepsilon$ at $0$. To this end, notice that for any $x\in M$ and $v\in T_xM$, one has
	\begin{align*}
	\frac{d}{d \tau} f(\operatorname{Ret}_x (\tau v)) 
	= df\lvert_{\operatorname{Ret}_x(\tau v)} (v_\tau ) \ , 
	\end{align*}
	where $v_\tau = \frac{d}{d\tau} \operatorname{Ret}_x (\tau v)$.
	Hence, we obtain that
	\begin{align*}
	R_{\operatorname{Ret}}^{x,v} (\tau) &:=\frac{d^2}{d \tau^2} f(\operatorname{Ret}_x (\tau v)) 
	= \operatorname{Hess} f(v_\tau, v_\tau) + df\lvert_{\operatorname{Ret}_x(\tau v)} \left(\frac{D}{d\tau} v_\tau\right) \text{,}
	\end{align*}
	where $\operatorname{Hess} f = \nabla d f$ is the Riemannian Hessian of $f$.
	Using that $v_0 = v$, the desired Taylor expansion  takes the form
	\begin{equation*}
	f(\operatorname{Ret}_x (\tilde\varepsilon v)) = f(x) + \tilde\varepsilon df\lvert_x(v)
	+ \frac{1}{2}\tilde\varepsilon^2 R_{\operatorname{Ret}}^{x,v} (\xi_1)\ ,
	\end{equation*}
	where $\xi_1\in [0,\tilde\varepsilon]$.
	Defining $u_\tau = \frac{d}{d\tau} \operatorname{Exp}_{x}(\tau v)$, 
	we similarly obtain that
	\begin{equation*}
	\begin{split}
	f(\operatorname{Exp}_x (\tilde\varepsilon v)) &= f(x) + \tilde\varepsilon df\lvert_x(v) + \frac{1}{2}\tilde\varepsilon^2 R_{\operatorname{Exp}}^{x,v} (\xi_2) \ ,
	\end{split}
	\end{equation*}
	with $\xi_2 \in [0,\tilde\varepsilon]$ and
	\begin{equation*}
	R_{\operatorname{Exp}}^{x,v}(\tau) = \operatorname{Hess} f(u_{\tau}, u_{\tau})\ ,
	\end{equation*}
	where for the last equality we used the fact that 
	$
	\frac{D}{d\tau} u_{\tau} = 0.
	$
	Since $R_{\operatorname{Ret}}^{x,v} (0) = R_{\operatorname{Exp}}^{x,v} (0) + 2(Gf)(x,v)$,
	we obtain that
	\begin{equation*}
	\begin{split}
	&\lvert f(\operatorname{Ret}_x (\tilde\varepsilon\, v)) - f(\operatorname{Exp}_x (\tilde\varepsilon\, v))- \tilde\varepsilon^2 (Gf)(x,v)\rvert \\
	=& \frac{\tilde\varepsilon^2}{2}\lvert R_{\operatorname{Ret}}^{x,v}(\xi_1 ) - R_{\operatorname{Ret}}^{x,v}(0) - R_{\operatorname{Exp}}^{x,v}(\xi_2) + R_{\operatorname{Exp}}^{x,v}(0)\rvert \\
	\leq & \frac{\tilde\varepsilon^2}{2}\Big(\lvert R_{\operatorname{Ret}}^{x,v}(\xi_1 ) - R_{\operatorname{Ret}}^{x,v}(0)\rvert + \lvert R_{\operatorname{Exp}}^{x,v}(\xi_2) - R_{\operatorname{Exp}}^{x,v}(0)\rvert\Big)\text{.}
	\end{split}
	\end{equation*}
	Since $\operatorname{Hess} f$ is Hölder-continuous by assumption and $\operatorname{Exp}$ and $\operatorname{Ret}$ are smooth, both $R_{\operatorname{Exp}}$ and $R_{\operatorname{Ret}}$ are Hölder-continuous as products and sums of Hölder-continuous functions. Therefore, there exists a constant $C>0$ such that one has the uniform bound
	\begin{align*}
	\lvert f(\operatorname{Ret}_x (\tilde\varepsilon\, v)) - f(\operatorname{Exp}_x (\tilde\varepsilon\, v))-\tilde\varepsilon^2 (Gf)(x,v)\rvert 
	\leq \frac{C}{2} ( \xi_1^\alpha + \xi_2^\alpha)   \tilde\varepsilon^2 \leq C \tilde \varepsilon^{2+\alpha}  \ .
	\end{align*}
	Hence,
	\begin{align*}
	\sup_{x\in M}\left\lvert \frac{1}{\varepsilon^2} \left(U^\varepsilon f(x) - P^\varepsilon f(x)\right)-(\tilde{G}f)(x)\right\rvert 
	\leq C m^{1+\frac{\alpha}{2}} \varepsilon^\alpha \xrightarrow{\varepsilon\rightarrow 0} 0 \ ,
	\end{align*}
	which proves the lemma.
\end{proof}

Using the above results, we can finish the proof of Theorem \ref{thm:convergence}.

\begin{proof}[Proof of Theorem \ref{thm:convergence}]
	By the proof of Theorem 2.1 and the remark on page 38 in \cite{jorgensen1975}, the space $C^{(2,\alpha)}(M)$ is a core for the differential operator $L$. 
	For a sequence of Feller processes $(Z^\varepsilon)_{\varepsilon > 0}$ with generators $L^\varepsilon, \varepsilon>0$ and another Feller process $Z$ with generator $L$ and core $D$ the following are equivalent (\cite{kallenberg2002}, Theorem 19.25):
	\begin{enumerate}
		\item[(i)] If $f\in D$, there exists a sequence of $f_\varepsilon\in \operatorname{Dom}(L^\varepsilon)$ such that $\lVert f_\varepsilon-f\rVert_\infty \rightarrow 0$ and $\left\lVert L^\varepsilon f_\varepsilon-Lf\right\rVert_\infty \rightarrow 0$ as $\varepsilon \rightarrow 0$.
		\item[(ii)] If the initial conditions satisfy $Z^\varepsilon_0 \rightarrow Z_0$ in distribution as $\varepsilon\rightarrow 0$ in $M$, then $Z^\varepsilon \rightarrow Z$ as $\varepsilon\rightarrow 0$ in distribution in the Skorokhod space $\mathcal{D}_M([0,\infty))$.
	\end{enumerate}
	In our case $D=C^{(2,\alpha)}(M)$ and $f_\varepsilon = f\in D$ satisfy (i) due to Lemma \ref{lem:generator-convergence}. This concludes the proof of Theorem~\ref{thm:convergence} since the $L$-diffusion is also a Feller process on a compact manifold.
	
\end{proof}
From an analytical point of view it is perhaps not surprising that convergence of generators, see Lemma \ref{lem:generator-convergence}, implies convergence of stationary measures as stated in Theorem \ref{thm:inv-measure}. For the proof we here follow standard arguments of the theory of Feller processes.

\begin{proof}[Proof of Theorem \ref{thm:inv-measure}]
	The family of (unique) stationary measures $\left(\mu_\varepsilon\right)_{\varepsilon> 0}$ is a tight family of measures since all measures are supported on the same compact manifold $M$. By Prokhorov's theorem, which provides equivalence of tightness and relative compactness, any subsequence of the family $(\mu_{\varepsilon_n})_{n\in\mathbb{N}}$ with $\varepsilon_n \rightarrow 0$ as $n\rightarrow\infty$ has a convergent subsubsequence. But the uniform convergence of the generators that we showed in Lemma \ref{lem:generator-convergence}, see also (i) in the proof of Theorem \ref{thm:convergence}, is also equivalent to the convergence of the associated semigroups with respect to the supremum norm in space, see again \cite{kallenberg2002}, Theorem 19.25. This implies (\cite{ethier1986}, Chapter 4, Theorem 9.10) that
	all subsequential limits must be the unique stationary measure of the $L$-diffusion, and therefore all subsubsequences converge to the same measure. A standard subsubsequence argument then proves the theorem. 
\end{proof}

\subsection*{Retractions on sub-Riemannian manifolds}
A possible extension of our work concerns random walks in \emph{sub-Riemannian} geometries. Indeed, sub-Riemannian structures can be used to model low-dimensional noise that lives in a higher dimensional state space~\cite{habermann2017}. A sub-Riemannian structure on a smooth manifold $M$ consists of a smoothly varying positive definite quadratic form on a sub-bundle $E$ of the tangent bundle $TM$. Similar to the Riemannian setting, so called \emph{normal} sub-Riemannian geodesics arise from a first order ODE on the cotangent bundle~\cite{montgomery2002}. Such geodesics are uniquely determined by an initial position $x\in M$ and a $1$-form $\alpha \in T^{*}_{x}M$, and can therefore be approximated by a second order Taylor expansion of the respective ODE on the cotangent bundle. Such an approximation would algorithmically resemble our retraction-based  approach, when attempting to efficiently simulate a sub-Riemannian geodesic random walk.  However, in general, there exists no canonical choice for \emph{sampling} a requisite $1$-form $\alpha$ in every step in order to construct a sub-Riemannian geodesic random walk (or an approximation thereof). Indeed, one requires an additional choice of a measure for such sampling, and different choices may lead to different limiting processes, see~\cite{agrachev2018, boscain2017}. Notice that once such a choice has been made, one can seamlessly adapt our algorithm to the sub-Riemannian setting. We leave for future work a respective convergence analysis. Moreover, there might exist canonical choices of such measures for special sub-Riemannian structures -- e.g., when considering the frame bundle of a smooth manifold $M$ in order to model anisotropic Brownian motion and general diffusion processes~\cite{grong2022, sommer2017}. 

\printbibliography

\end{document}